\author{Hubert \sc{Klaja} \thanks{Laboratoire Paul Painlev\' e,
Universit\'e Lille 1, CNRS UMR 8524, B\^at. M2,
F-59655 Villeneuve \newline d'Ascq Cedex, France ; {\tt hubert.klaja@gmail.com}}} %
\title{Hyperinvariant subspaces for some compact perturbations of multiplication operators}
\date{}
\theoremstyle{definition} \newtheorem{definition}{Definition}[section]
\theoremstyle{definition} \newtheorem{example}[definition]{Example}
\theoremstyle{remark}     
\theoremstyle{plain}      \newtheorem{theorem}[definition]{Theorem}
\theoremstyle{plain}      \newtheorem{proposition}[definition]{Proposition}
\theoremstyle{plain}      \newtheorem{corollary}[definition]{Corollary}
\theoremstyle{plain}      \newtheorem{lemma}[definition]{Lemma}
\theoremstyle{definition}
  \newcommand{\norme}[1]
    {\left\| #1 \right\|}
  \newcommand{\abs}[1]
    {\left\vert #1 \right\vert}
  \newcommand{\ps}[2]
    {\left\langle #1,#2 \right\rangle}
  \newcommand{\integral}[3]
    {\int_{#1} #2 \mathrm{d}#3}
  \newcommand{\ind}[1]
    {\mathbbm{1}_{#1}}
  \newcommand{\conj}[1]
    {\overline{#1}}
  \newcommand{\Range}
    {\mathrm{Ran}}
  \newcommand{\I}{\mathfrak{i}}
  \newcommand{\B}[1]{\mathcal{B}(#1)}
  \newcommand{\K}[1]{\mathcal{K}(#1)}
  \newcommand{\N}{\mathbb{N}}
  \newcommand{\R}{\mathbb{R}}
  \newcommand{\C}{\mathbb{C}}
  \newcommand{\Lun}{L_1}
  \newcommand{\Lp}[1]{L_{#1}}
  \newcommand{\Spec}[1]{\sigma(#1)} 
  \newcommand{\Specp}[1]{\sigma_p(#1)}
  \newcommand{\Spece}[1]{\sigma_e(#1)}
  \newcommand{\Com}[1]{\lbrace #1 \rbrace'}
  \newcommand{\Reel}[1]{\mathsf{Re}(#1)}
  \newcommand{\tens}[2]{#1 \otimes #2}
  \newcommand{\m}{m}
\begin{document}

\maketitle

\begin{abstract} 
  In this paper, a sufficient condition for the existence of hyperinvariant subspace of compact perturbations of multiplication operators on some Banach spaces is presented. An interpretation of this result for compact perturbations of normal and diagonal operators on Hilbert space is also discussed. An improvement of a result of \cite{Fang_Xia_2012} for compact perturbations of diagonal operators is also obtained.\\
  \textbf{Keywords}: invariant subspace problem; hyperinvariant subspace problem; compact perturbations of normal operators; compact perturbations of diagonal operators.\\
  \textbf{MSC 2010} : 47A15, 47A10, 47B15, 47B38.
\end{abstract}

\section{Introduction}
Let $X$ be a separable complex Banach space. The invariant subspace problem is the question whether every bounded linear operator $T \in \B{X} $ has a non trivial invariant subspace; in other words does there exist a closed subspace $M$ of $X$ such that $M \ne \lbrace 0 \rbrace$, $M \ne X $ and $T(M) \subset M $? The hyperinvariant subspace problem is the question whether every bounded linear operator $T \in \B{X}$ such that $T \ne \lambda I $ has a non trivial hyperinvariant subspace, i.e. whether there exists a closed subspace $M$ of $X$ such that $M \ne \lbrace 0 \rbrace$, $M \ne X $ and for every bounded operator $S \in \B{X}$ such that $ST = TS $, we have  $S(M) \subset M $? Enflo \cite{Enflo_1987} and Read \cite{Read_1986} proved that the invariant subspace problem has a negative answer on some Banach spaces. On the other hand, Argyros and Haydon \cite{Argyros_Haydon_2011} constructed a Banach space where every bounded linear is a compact perturbation of a scalar operator, hence by Lomonosov's celebrated result \cite{Lomonosov_1973}, every non scalar operator has a non trivial hyperinvariant subspace. However the invariant and hyperinvariant subspace problem are still open in reflexive Banach spaces, and in particular in Hilbert spaces. For normal operators in Hilbert spaces, the spectral theorem ensures the existence of an hyperinvariant subspace. Lomonosov \cite[Theorem 6.1.2]{Chalendar_Partington_2011} proved that every compact operator on a Banach space has a non trivial invariant subspace. But if $N$ is a normal operator on a Hilbert space $H$, and $K$ is compact operator on $H$, we don't know in general if $N+K$ has a non trivial hyperinvariant subspace or not. We refer the reader to the book \cite{Chalendar_Partington_2011} for more information about the Invariant Subspace Problem.

  In 2007 Foias, Jung, Ko and Pearcy \cite{Foias_Jung_Ko_Pearcy_2007} proved the following theorem. 
\begin{theorem}[\cite{Foias_Jung_Ko_Pearcy_2007}]
  Let $(e_n)_{n \in \N} $ be an orthonormal basis in a separable complex Hilbert space $H$. Let $D = \sum_{n \in \N} \lambda_n \tens{e_n}{e_n} $ be a bounded diagonal operator on $H$. Let $u,v \in H $ be two vectors. If
  $$
  \sum_{n \in \N} \abs{\ps{u}{e_n}}^{\frac{2}{3}} < \infty, \quad \sum_{n \in \N} \abs{\ps{v}{e_n}}^{\frac{2}{3}} < \infty,
  $$
  and if $D + \tens{u}{v} \ne \lambda I $, then the rank one perturbation $D + \tens{u}{v} $ of the diagonal operator $D$ has a non trivial hyperinvariant subspace.
\end{theorem}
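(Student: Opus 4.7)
My plan is to reduce the statement to Lomonosov's classical criterion: if $T \in \B{H}$ is not a scalar and its commutant $\Com{T}$ contains a nonzero compact operator, then $T$ has a non trivial hyperinvariant subspace. Since $T = D + \tens{u}{v}$ is a compact (rank one) perturbation of the normal operator $D$, Weyl's theorem gives $\Spece{T} = \Spece{D} = \adherence{\lbrace \lambda_n : n \in \N \rbrace}$. If there is an isolated point $\mu \in \Spec{T} \setminus \Spece{T}$, the Riesz spectral projection $P_\mu$ is a nonzero finite rank operator in the bicommutant of $T$, and either $\Range P_\mu$ or $\Ker P_\mu$ is a non trivial hyperinvariant subspace. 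Hence I may assume $\Spec{T} = \Spece{T}$ and focus on producing a compact element of $\Com{T}$.

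In this main case I would exploit the rank one structure through the Sherman-Morrison-Woodbury identity: for $\lambda$ in the resolvent set of $T$,
$$
(T - \lambda)^{-1} = (D - \lambda)^{-1} - \phi(\lambda)^{-1}\, \tens{(D - \lambda)^{-1} u}{(D^* - \conj{\lambda})^{-1} v},
$$
where $\phi(\lambda) = 1 + \sum_{n \in \N} \frac{u_n \conj{v_n}}{\lambda_n - \lambda}$ with $u_n = \ps{u}{e_n}$, $v_n = \ps{v}{e_n}$. The $\ell^{2/3}$ assumption gives, via Cauchy-Schwarz,
$$
\sum_{n \in \N} \abs{u_n \conj{v_n}}^{1/3} \leq \Bigl(\sum_{n \in \N} \abs{u_n}^{2/3}\Bigr)^{1/2} \Bigl(\sum_{n \in \N} \abs{v_n}^{2/3}\Bigr)^{1/2} < \infty,
$$
so the residues of $\phi$ form an $\ell^{1/3}$ sequence. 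This strong summability should endow $\phi$ with refined boundary regularity on $\Spece{T}$ and tight control of its zeros.

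The crux, which I expect to be the main obstacle, is to turn this regularity into a nonzero compact element of $\Com{T}$. My idea is to use the Riesz-Dunford calculus: for $f$ holomorphic on a neighbourhood of $\Spec{T}$, substituting the resolvent identity into $f(T) = (2\pi \I)^{-1} \oint f(\lambda)(T - \lambda)^{-1}\, d\lambda$ expresses $f(T) - f(D)$ as the contour integral of $f(\lambda) \phi(\lambda)^{-1}$ against a rank one operator whose coefficients are controlled by the $\ell^{1/3}$ tail of the residues. Choosing $f$ that vanishes on $\Spece{T}$ with Hölder order tuned to the $\ell^{2/3}$ decay of $u$ and $v$ should make $f(D)$ compact and keep the correction term compact as well, yielding $f(T) \in \Com{T} \cap \K{H}$. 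The sharp point, where the exponent $2/3$ presumably enters essentially, is to guarantee that such $f$ can be chosen with $f(T) \ne 0$; if every compatible $f$ made $f(T)$ vanish, one should be able to deduce $T = \lambda I$, contradicting the hypothesis. Lomonosov's theorem then completes the proof.
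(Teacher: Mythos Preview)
The paper does not give a separate proof of this cited result; it is recovered from Theorem~\ref{ThDiagPlusKompactSummabilityCondition} through the inclusion $\ell^{2/3}\subset\ell^1$. That argument does not try to exhibit a compact operator in $\Com{T}$. Instead one chooses a Jordan curve $\Gamma$ that \emph{crosses} $\Spece{D}$ along a vertical segment $s_0$; the summability hypothesis (via Lemma~\ref{Lem21}) guarantees that for almost every abscissa the vectors $(D-z)^{-1}u$ and $(D^*-\conj z)^{-1}v$ lie in $H$ and depend continuously on $z\in s_0$, so that $A(z)$ is compact and continuous on all of $\Gamma$. Integrating the right inverse $R(z)=(D-z)^{-1}-B(z)$ (which in the rank-one case is exactly your Sherman--Morrison formula) over $\Gamma$ then produces an operator $P+L$ with $P$ an idempotent of infinite rank and corank and $L$ compact, and one shows $\Com{T}\subset\Com{P+L}$; Proposition~\ref{PropPplusKaSHI} finishes.

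Your scheme has a genuine gap precisely at the step you flag as ``sharp''. Under the reduction $\Spec{T}=\Spece{T}$, the Riesz--Dunford integral you write requires $f$ holomorphic on a neighbourhood of $\Spec{T}$ and the contour in the resolvent set. If $\Spece{T}$ is connected with more than one point---the only case not already handled by ordinary spectral projections---any holomorphic $f$ vanishing on $\Spece{T}$ is identically zero on the component of its domain containing $\Spec{T}$, hence $f(T)=0$. Invoking ``H\"older order tuned to the $\ell^{2/3}$ decay'' does not rescue this: boundary regularity of $f$ does not by itself define $f(T)$, and you have not supplied a functional calculus beyond Riesz--Dunford that would. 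The mechanism through which the $\ell^{2/3}$ (or, in the paper, $\ell^1$) condition actually enters is not fine control of the zeros of $\phi$, but the possibility of pushing the integration contour \emph{through} the essential spectrum while keeping the rank-one kernel $\tens{(D-z)^{-1}u}{(D^*-\conj z)^{-1}v}$ bounded and continuous along it. Without that idea, Sherman--Morrison plus Lomonosov does not close the argument.
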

  In 2012 Fang and Xia \cite{Fang_Xia_2012} improved this result. Their approach allowed to consider finite rank perturbations of a diagonal operator. They also improved the summability condition of Foias, Jung, Ko and Pearcy. Here is their result.
\begin{theorem}[\cite{Fang_Xia_2012}]
  \label{ThFanXia2012}
  Let $(e_n)_{n \in \N} $ be an orthonormal basis in a separable complex Hilbert space $H$. Let $D = \sum_{n \in \N} \lambda_n \tens{e_n}{e_n} $ be a bounded diagonal operator on $H$. Let $u_1, \dots, u_r, v_1, \dots, v_r \in H $ be vectors. If
  $$
  \sum_{k = 1}^r \sum_{n \in \N} \abs{\ps{u_k}{e_n}} < \infty, \quad \sum_{k = 1}^r \sum_{n \in \N} \abs{\ps{v_k}{e_n}} < \infty,
  $$
  and if $D + \sum_{i=1}^r \tens{u_i}{v_i} \ne \lambda I $, then the finite rank perturbation $D + \sum_{i=1}^r \tens{u_i}{v_i} $ of the diagonal operator $D$ has a non trivial hyperinvariant subspace.  
\end{theorem}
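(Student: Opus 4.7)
My plan is to prove Theorem \ref{ThFanXia2012} by embedding $T := D + \sum_{i=1}^r \tens{u_i}{v_i}$ into a Banach-space framework, proving hyperinvariance there, and descending back to $H$.

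First, the standard spectral reductions. If $T$ has any eigenvalue $\mu$, then $\Ker(T - \mu I)$ is a hyperinvariant subspace, non-trivial since $T \ne \lambda I$. So assume $T$ has no eigenvalues. Since $K := \sum_{i=1}^r \tens{u_i}{v_i}$ is compact, $\Spece{T} = \Spece{D}$, and by Fredholm theory $\Spec{T}$ consists of $\Spece{D}$ together with isolated eigenvalues; the no-eigenvalues assumption gives $\Spec{T} = \Spece{D} \subset \overline{\{\lambda_n\}_n}$. If $\Spec{T}$ is disconnected, Riesz projections yield hyperinvariant subspaces; so assume $\Spec{T}$ is connected.

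The crucial observation is that the $\ell^1$ summability $\sum_n |\ps{u_i}{e_n}|, \sum_n |\ps{v_i}{e_n}| < \infty$ implies that the matrix $[K_{mn}]$ of $K$ in the basis $(e_n)$ satisfies $\sum_{m,n} |K_{mn}| < \infty$. Hence $T = D + K$ extends to a bounded operator on the Banach sequence spaces $c_0$ and $\ell^p$ for $1 \le p \le \infty$, via the coordinate map $x \mapsto (\ps{x}{e_n})_n$. On such a space, $T$ is a compact perturbation of a multiplication operator. I would then invoke (or establish in parallel) the paper's general Banach-space result on hyperinvariant subspaces of such perturbations, which produces a non-trivial closed $T$-hyperinvariant subspace $M$ in the ambient Banach space, typically constructed through spectral functional calculus along an arc of $\Spec{T}$ and quantitative estimates fed by the $\ell^1$ control.

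The main obstacle is the descent step: setting $M_H := \overline{M \cap H}^H$, one must show $M_H$ is a non-trivial proper subspace of $H$ invariant under every $S \in \B{H}$ satisfying $ST = TS$. Non-triviality and properness should follow from explicit inspection of the construction of $M$ (e.g.\ by exhibiting specific coordinate vectors lying in or outside $M$). Hyperinvariance, however, is delicate: an arbitrary $S \in \Com{T}$ on $H$ need not extend to the Banach space, so the hyperinvariance does not transfer for free. The strategy is to characterize $M_H$ intrinsically through $T$-data --- for instance as the closure of the range of an operator in the bicommutant of $T$, or as a spectral maximal space in the sense of Colojoar\u{a}--Foia\c{s} --- so that $S$-invariance is forced abstractly from $S \in \Com{T}$. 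I expect this transfer step to be the principal technical difficulty, requiring careful use of the $\ell^1$ structure to show that any commuting $S$ on $H$ respects the coordinate-level construction used to produce $M$.
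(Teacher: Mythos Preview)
Your proposal contains a genuine gap stemming from an unnecessary detour. The paper does not embed $T$ into a different Banach space and then descend; it observes that $H$ with the diagonal $D$ \emph{already is} an $L^2$ space: take $\Omega=\N$, $\mu$ a weighted counting measure, and $f(n)=\lambda_n$, so that $D$ is unitarily equivalent to $M_f$ on $L^2(\Omega,\mu)$. Theorem~\ref{ThMainTheorem} then applies directly on $H$ itself, and the hyperinvariant subspace it produces lives in $H$ from the start. Your descent step --- which you rightly flag as the principal difficulty, since a general $S\in\Com{T}$ on $H$ need not extend to $c_0$ or $\ell^p$ for $p\neq 2$ --- is therefore a problem of your own making, and you do not actually solve it; the sketched strategies (bicommutant characterizations, spectral maximal spaces) are not carried out.

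What you are missing is the real work the paper does: verifying the hypotheses of Theorem~\ref{ThMainTheorem} on $H$. One must produce a Jordan curve $\Gamma$ separating two points of $\Spece{D}$ on which the operator $A(z)=\sum_n s_n\,(D-z)^{-1}u_n\otimes (D^*-\bar z)^{-1}v_n$ is compact and depends continuously on $z$. The $\ell^1$ hypotheses on the $u_i,v_i$ are used, via a Borel--Cantelli/Beppo--Levi argument (Lemma~\ref{Lem21}), to show that for almost every real $x_0$ one has $\sum_{n,k}|a_n\ps{u_n}{e_k}|^2/|\Reel{\lambda_k}-x_0|^2<\infty$, which in turn bounds $\|A_1(z)\|$ on the vertical line $\Reel{z}=x_0$ and gives compactness and continuity there (Lemmas~\ref{LemMajDzA1}--\ref{LemCompactContinu}). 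The remaining three sides of $\Gamma$ are placed outside $\Spec{D}$, where $(D-z)^{-1}$ is bounded and the estimates are trivial. This construction of $\Gamma$, not any transfer between spaces, is the heart of the argument.
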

  
  The goal of this paper is to improve Fang and Xia's approach in order to deal with some compact perturbations of multiplication operators on separable $\Lp{p} $ spaces. The well-known spectral theorem for normal operator tells us that every normal operator is a multiplication operator on some $\Lp{2} $ space. As a diagonal operator is a particular case of a normal operator, this can be seen as a generalization of the previous result.

\subsection{Notations}
  In this paper, we will denote by $H$ a separable complex Hilbert space, and by $X$ a separable complex Banach space. We will denote by $m $ the Lebesgue measure on the complex plane. We will denote the set of all bounded operators (respectively the set of all compact operators) acting on $X$ by $\B{X}$ (respectively $\K{X} $). Let $T \in \B{X}$ be a bounded operator. We will denote the commutant of $T$ by
  $$
    \Com{T}= \lbrace S \in \B{X}, ST = TS \rbrace.
  $$
  We will also denote respectively the spectrum, the point spectrum and the essential spectrum of an operator $T$ by $\Spec{T}$, $\Specp{T} $ and $\Spece{T} $. Let $ (\Omega,\mu) $ be a borelian $\sigma$-finite measure space. Let $p,q \in ]1, \infty[ $ be two positive numbers such that $\frac{1}{p} + \frac{1}{q} =1 $. If $f \in \Lp{\infty}(\Omega,\mu)$ is a bounded complex valued function, we will denote by $M_f : \Lp{p}(\Omega,\mu) \rightarrow \Lp{p}(\Omega,\mu)$ the linear operator defined by $M_f(g)(\xi) = f(\xi)g(\xi) $.
  
   Let $(s_n)_{n \in \N}$ be a sequence of positive real numbers such that $\lim_{n \rightarrow \infty} s_n = 0 $. Let $(u_n)_{n \in \N} $ be a sequence in $ \Lp{p}(\Omega,\mu)$ and $(v_n)_{n \in \N} $ be a sequence in $ \Lp{q}(\Omega,\mu)$. For all $u,x \in \Lp{p}(\Omega,\mu)$ and $v \in \Lp{q}(\Omega,\mu)$, we define $\tens{u}{v}(x) = \left(\integral{\Omega}{x(\xi) \conj{v(\xi)}}{\mu(\xi)}\right) u $. This will avoid a change of notation in Hilbert spaces. Indeed, in the case $p=q=2$, we have that $\tens{u}{v}(x) = \ps{x}{v}u $. We will denote by $K:\Lp{p}(\Omega,\mu) \rightarrow \Lp{p}(\Omega,\mu) $ the operator defined by $K = \sum_{n \in \N} s_n \tens{u_n}{v_n} $. In general this operator need not be compact (it may also be unbounded).
  
\subsection{Main Results}
  Here are the main results of the paper. The first result is a generalization of Fang and Xia's approach in \cite{Fang_Xia_2012}. The generalization allows us to consider some compact perturbations of multiplication operators in $\Lp{p}$ spaces. Remember that a diagonal operator is a particular case of a multiplication operator on a $\Lp{2}(\Omega,\mu)$ space with $\mu$ being a purely atomic measure.
  
\begin{theorem}
  \label{ThMainTheorem}
  Let $ (\Omega,\mu) $ be a borelian $\sigma$-finite measure space. Let $f \in \Lp{\infty}(\Omega,\mu) $ be a bounded complex valued function. 
Let $(u_n)_{n \in \N} $ be a sequence in $ \Lp{p}(\Omega,\mu)$ and $(v_n)_{n \in \N} $ be a sequence in $ \Lp{q}(\Omega,\mu)$.
  Denote by $K$ the operator defined by $K = \sum_{n \in \N} s_n \tens{u_n}{v_n} $. Suppose that $K$ is compact and that there exists a rectifiable piecewise smooth Jordan curve $\Gamma$ in $\C$ such that
  \begin{itemize}
    \item[1.] There exist $a,b \in \Spece{M_f}$ such that $a$ is in the interior of $\Gamma$ and $b$ is in the exterior of $\Gamma$,
    \item[2.] $\mu(f^{-1}(\Gamma))=0$,
    \item[3.] For all $n \in \N$, $z \in \Gamma$, we have that $u_n \in \Range(M_f -z)$ and $v_n \in \Range(M_f-z)^* $,
    \item[4.] Denote by $A(z)$ the (possibly unbounded) operator $A(z) = \sum_{n \in \N} s_n \tens{ \left( (M_f-z)^{-1}u_n \right) }{\left( (M_{\conj{f}} -\conj{z})^{-1} v_n \right)}$. For all $z \in \Gamma$, we suppose that $A(z)$ is a compact operator, and $A : \Gamma \rightarrow \K{\Lp{p}(\Omega,\mu)} $ is a continuous application. 
  \end{itemize}
  Then the bounded operator $T = M_f + K$ acting on $\Lp{p}(\Omega,\mu)$ has a non trivial hyperinvariant subspace.
\end{theorem}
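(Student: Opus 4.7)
The plan is to construct a non-trivial hyperinvariant spectral projection for $T = M_f + K$, imitating the Riesz projection $\frac{1}{2\pi i}\oint_\Gamma (zI-T)^{-1}\,\mathrm{d}z$. The key difficulty is that $(T-z)^{-1}$ need not exist as a bounded operator for $z \in \Gamma$: condition~2 only forbids $f$ from taking values on $\Gamma$ on a set of positive $\mu$-measure, so $\Gamma$ may still meet $\Spec{M_f}=\Spece{T}$. The symmetric factorization
\[
K = (M_f - z)\,A(z)\,(M_f - z), \qquad z \in \Gamma,
\]
which is an immediate consequence of condition~3 and the definition of $A(z)$, is the tool that unlocks the argument. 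It produces the bounded identity $T - z = (M_f - z)\,Q(z)$ with $Q(z) := I + A(z)(M_f - z)$ a compact perturbation of $I$, hence Fredholm of index $0$, depending norm-continuously on $z \in \Gamma$ by condition~4.

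The proof then splits into two cases. If $Q(z_0)$ fails to be invertible at some $z_0 \in \Gamma$, then any $x \in \Ker Q(z_0) \setminus \{0\}$ satisfies $(T - z_0)x = (M_f - z_0) Q(z_0) x = 0$, so $\Ker(T - z_0)$ is a non-trivial closed hyperinvariant subspace of $\Lp{p}(\Omega,\mu)$; it is proper because $T$ is non-scalar (the hypothesis provides distinct $a, b \in \Spece{M_f} = \Spece{T}$). Otherwise, $Q(z)$ is invertible for every $z \in \Gamma$, and I would define
\[
E \;:=\; P_\Delta + F, \qquad F \;:=\; \frac{1}{2\pi i}\oint_\Gamma Q(z)^{-1} A(z)\,\mathrm{d}z,
\]
where $\Delta$ is the bounded component of $\C \setminus \Gamma$ and $P_\Delta := M_{\ind{f^{-1}(\Delta)}}$. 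The motivation is the formal identity $(zI - T)^{-1} = (zI - M_f)^{-1} + Q(z)^{-1} A(z)$, obtained by algebraic manipulation of the factorization. Applied pointwise in $\xi$ (legitimate by condition~2, which gives $f(\xi) \notin \Gamma$ for $\mu$-a.e.\ $\xi$), Cauchy's formula shows $P_\Delta = \frac{1}{2\pi i}\oint_\Gamma (zI-M_f)^{-1}\,\mathrm{d}z$ in this pointwise sense; the integral defining $F$ converges in operator norm as an integral of a norm-continuous family of compact operators, so $F$ is itself compact.

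The main task, and the main obstacle, is to show that $E$ is a bounded idempotent commuting with $T$. Commutation $[T,E] = 0$ reduces to the algebraic identity $[T,\,Q(z)^{-1} A(z)] = [M_f,\,A(z)]$, which follows from $(T-z)Q(z)^{-1} A(z) = (M_f - z)A(z)$ and $Q(z)^{-1}A(z)(T-z) = A(z)(M_f - z)$; integrating and applying Cauchy pointwise to the kernel $1/(z - f(\xi))$ then yields $\frac{1}{2\pi i}\oint_\Gamma [M_f, A(z)]\,\mathrm{d}z = [P_\Delta, K]$, which cancels with $[T, P_\Delta] = [K, P_\Delta]$. Idempotency $E^2 = E$ is the trickiest step, because the classical two-contour Riesz argument is unavailable without a bounded resolvent; I would establish it by squaring the explicit decomposition $E = P_\Delta + F$ and performing the double integration on two nested copies $\Gamma_1 \supset \Gamma_2$ of the curve, term by term, using the resolvent-like identities above and a Fubini interchange. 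Once $E$ is a non-trivial idempotent in $\Com{T}$, $\Range E$ is a closed hyperinvariant subspace for $T$; non-triviality is automatic since $E - P_\Delta = F$ is compact while $P_\Delta$ and $I - P_\Delta$ have infinite-dimensional ranges, thanks to $a \in \Spece{M_f} \cap \Delta$ and $b \in \Spece{M_f} \cap (\C \setminus \overline{\Delta})$.
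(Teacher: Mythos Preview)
Your construction of $E = P_\Delta + F$ coincides exactly with the paper's operator $P + L$, and your commutation identity $[T,\,Q(z)^{-1}A(z)] = [M_f,\,A(z)]$ is correct. However, the endgame contains two genuine gaps.

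\textbf{Hyperinvariance versus invariance.} You conclude that ``once $E$ is a non-trivial idempotent in $\Com{T}$, $\Range E$ is a closed hyperinvariant subspace for $T$.'' This is false: an idempotent in $\Com{T}$ yields only a $T$-\emph{invariant} subspace. For hyperinvariance one needs $E$ in the \emph{bicommutant} $\Com{T}'' $, i.e.\ $SE = ES$ for every $S \in \Com{T}$. Your algebraic identities are tailored to $T$ itself and do not extend to a general $S \in \Com{T}$. The paper proves precisely this stronger inclusion $\Com{T} \subset \Com{P+L}$, but by a different mechanism: it uses that $R(z) = (M_f - z)^{-1} - Q(z)^{-1}A(z)$ is a (densely defined) right inverse of $T-z$, together with injectivity of $T-z$ (from $\Specp{T}=\emptyset$), to deduce $SR(z)w = R(z)Sw$ on a dense set, and then integrates.

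\textbf{Idempotency via nested contours.} Your plan to show $E^2 = E$ by ``two nested copies $\Gamma_1 \supset \Gamma_2$ of the curve'' is not available under the stated hypotheses: condition~4 only supplies compactness and continuity of $A(z)$ for $z \in \Gamma$, not for any deformation of $\Gamma$. Since $\Gamma$ may well meet $\Spec{M_f} = \Spece{T}$, there is no resolvent identity to fall back on, and in fact there is no reason to expect $E^2 = E$ in general. The paper never claims $P+L$ is idempotent. Instead, after establishing $\Com{T} \subset \Com{P+L}$, it invokes a separate spectral fact: a compact perturbation $P+L$ of an idempotent $P$ with $\dim\Range(P) = \dim\Range(I-P) = \infty$ always has a non-trivial hyperinvariant subspace (Weyl's theorem forces $\Spec{P+L}\subset\{0,1\}$ when $\Specp{P+L}=\emptyset$, and then the ordinary Riesz--Dunford calculus for $P+L$ applies). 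Any hyperinvariant subspace for $P+L$ is then automatically hyperinvariant for $T$ via the commutant inclusion.
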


Note that if $T$ satisfies the assumptions of Theorem \ref{ThMainTheorem}, then $\Spece{T} = \Spece{M_f + K} = \Spece{M_f}$. As $M_f$ has two distinct values in its essential spectrum, $T$ also has. Hence $T$ can not be a scalar operator. 
The second result is a generalization of Fang and Xia's result (cf Theorem \ref{ThFanXia2012}) in the particular case of compact perturbation of a diagonal operator on Hilbert spaces. This is a consequence of the previous Theorem.

\begin{theorem}
    \label{ThDiagPlusKompactSummabilityCondition}
  Let $(e_k)_{k \in \N} $ be an orthonormal basis of $H$. Let $D = \sum_{k \in \N} \lambda_k \tens{e_k}{e_k} $ be a bounded diagonal operator on a Hilbert space. Let $ K = \sum_{n \in \N} s_n \tens{u_n}{v_n} $ be a compact operator.
  If there exist two sequences $(a_n)_{n \in \N}, (b_n)_{n \in \N}$ such that for all $n \in \N$, $a_n b_n = s_n $ and
  \begin{align}
    \sum_{n \in \N} \sum_{k \in \N} \abs{a_n \ps{u_n}{e_k}} < \infty  \label{Cond1} \\
    \sum_{n \in \N} \sum_{j \in \N} \abs{b_n \ps{e_j}{v_n}} < \infty, \label{Cond2}
  \end{align}
  and if $D+K \ne \lambda I $, then $T=D+K$ has a non-trivial hyperinvariant subspace.
\end{theorem}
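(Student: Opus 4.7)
The plan is to apply Theorem \ref{ThMainTheorem} with $M_f = D$, realizing $H$ as $\Ldeux(\N)$ with counting measure and setting $f(k) = \lambda_k$. First, I dispatch a degenerate case: if $\Spece{D}$ contains at most one point $\alpha$, then $(\lambda_k)$ accumulates only at $\alpha$, so $D - \alpha I$ is compact, and hence $T - \alpha I = (D - \alpha I) + K$ is a nonzero compact operator because $T \ne \alpha I$ by assumption. Lomonosov's theorem yields a nontrivial hyperinvariant subspace for $T - \alpha I$, which is also hyperinvariant for $T$ since $\Com{T} = \Com{T - \alpha I}$.

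Assume henceforth that $\Spece{D}$ contains two distinct points $a$ and $b$. Set $c_k = \sum_n \abs{a_n \ps{u_n}{e_k}}$ and $d_k = \sum_n \abs{b_n \ps{v_n}{e_k}}$, so that (\ref{Cond1}) and (\ref{Cond2}) give $\sum_k c_k < \infty$ and $\sum_k d_k < \infty$. The operator $A(z)$ of Theorem \ref{ThMainTheorem} satisfies $A(z) = (D - z)^{-1} K (D - z)^{-1}$ in the formal sense, and using the inequality $\norme{w} \le \sum_k \abs{\ps{w}{e_k}}$ valid for every $w \in H$, I obtain the key pointwise bound
\[
\norme{A(z)} \le \Big( \sum_n \abs{a_n} \norme{(D-z)^{-1} u_n} \Big) \Big( \sum_n \abs{b_n} \norme{(D^* - \conj{z})^{-1} v_n} \Big) \le g_1(z) \, g_2(z),
\]
where $g_1(z) = \sum_k c_k/|\lambda_k - z|$ and $g_2(z) = \sum_k d_k/|\lambda_k - z|$ are Cauchy-like potentials of finite positive measures on $\C$. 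Both are lower semi-continuous and locally integrable on $\C$ by Fubini.

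It remains to produce a rectifiable piecewise smooth Jordan curve $\Gamma$ with $a$ in its interior and $b$ in its exterior, avoiding all $\lambda_k$, and on which $g_1$ and $g_2$ are uniformly bounded; this implies conditions 3 and 4 of Theorem \ref{ThMainTheorem}, since the displayed bound yields $A(z) \in \K(H)$ pointwise, while norm-continuity of $A$ at any $z_0 \in \Gamma$ follows by dominated convergence applied to the series $A(z) - A(z_0)$ with the same estimate as dominating sequence. The natural candidate is a circle $\Gamma = \lbrace z \in \C : |z - a| = r \rbrace$ with $r < |a - b|$; condition 1 is then automatic, and condition 2 fails for at most countably many $r$.

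The main obstacle is the uniform boundedness of $g_1, g_2$ on such a circle. On $\lbrace |z - a| = r \rbrace$ one has $g_i(z) \le \sum_k \mathrm{coef}_k / \abs{\rho_k - r}$ with $\rho_k = |\lambda_k - a|$, and this one-dimensional Cauchy-type sum need not be finite for generic $r$, since $\int 1/\abs{\rho_k - r} \, dr$ is logarithmically divergent, so an a.e.\ argument in $r$ alone does not suffice. To overcome this I propose a splitting argument: for each $\epsilon > 0$, pick a finite index set $F$ with $\sum_{k \notin F} (c_k + d_k) < \epsilon$; the heavy part contributes a bounded continuous piece to each $g_i$ on the circle once $r$ avoids the finite set $\lbrace \rho_k : k \in F \rbrace$, while the light tail has planar $\Lun$-mass of order $\epsilon$ on bounded regions of $\C$. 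A measure-theoretic selection based on Chebyshev's inequality, possibly perturbing the circle slightly to dodge small planar sets where the tail is large, yields a curve on which $g_1 + g_2$ is uniformly bounded, and Theorem \ref{ThMainTheorem} then delivers the desired hyperinvariant subspace for $T$.
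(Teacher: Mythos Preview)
Your reduction to Theorem \ref{ThMainTheorem}, the handling of the degenerate case $\#\Spece{D}\le 1$, and the pointwise bound $\norme{A(z)}\le g_1(z)g_2(z)$ are all correct. The gap is exactly where you flag it: the curve selection. The proposed fix (split $g_i$ into a finite heavy part and a light tail, use Chebyshev on the tail, then ``perturb the circle'') does not close. The tail $g_1^{F^c}(z)=\sum_{k\notin F}c_k/|\lambda_k-z|$ is an $\ell^1$-potential of a finite atomic measure; such a function is in $L^1_{loc}(\C)$ and hence finite $m$-a.e., but it can be unbounded on \emph{every} circle $|z-a|=r$ (take the $\lambda_k$ with $k\notin F$ so that the $\rho_k=|\lambda_k-a|$ are dense in $(0,|b-a|)$). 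Chebyshev only tells you the planar bad set $\{g_1^{F^c}>M\}$ has small area, not that it misses some circle, and ``dodging'' an open set that meets every circle will not in general produce a rectifiable piecewise smooth Jordan curve. The same obstruction persists on vertical lines: there $g_1(x_0+\I y)\le\sum_k c_k/|\Reel{\lambda_k}-x_0|$, but this one-dimensional $\ell^1$-potential need not be finite for any $x_0$, since $\int 1/|t-x|\,dx$ diverges.

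The paper avoids this by changing both the curve and the estimate. After rotating so that $\Reel{a}<\Reel{b}$, it takes $\Gamma$ to be a \emph{rectangle}: one vertical side $s_0=\{x_0+\I y:\,|y|\le\rho+1\}$ with $x_0\in(\Reel{a},\Reel{b})$, and three sides lying in the resolvent set of $D$, where $A(z)=(D-z)^{-1}K(D-z)^{-1}$ is trivially compact and continuous. On $s_0$ one has $|\lambda_k-z|\ge|\Reel{\lambda_k}-x_0|$, so any bound depending only on the $|\lambda_k-z|$ becomes uniform in $\Ima{z}$. The second, and decisive, change is to replace your $\ell^1$ bound by an $\ell^2$ (Hilbert--Schmidt) one: factor $A(z)=A_1(z)A_2(z)$ with $A_1(z)=\sum_n a_n\,\tens{(D-z)^{-1}u_n}{e_n}$, and estimate
\[
\norme{A_1(z)}^2\le\sum_{n,k}\frac{|a_n\ps{u_n}{e_k}|^2}{|\lambda_k-z|^2}\le\sum_{n,k}\frac{|\alpha_{n,k}|^2}{|\Reel{\lambda_k}-x_0|^2},\qquad \alpha_{n,k}=a_n\ps{u_n}{e_k}.
\]
This squared sum \emph{is} finite for a.e.\ $x_0$ (Lemma \ref{Lem21}, the Fang--Xia trick): excise intervals $I_{n,k}$ of length $2\delta|\alpha_{n,k}|$ around each $\Reel{\lambda_k}$; their total length is $\le 2\delta\sum|\alpha_{n,k}|<\varepsilon$ by (\ref{Cond1}), and off $I_{n,k}$ one has $\int|\alpha_{n,k}|^2/|\Reel{\lambda_k}-x|^2\,dx=2|\alpha_{n,k}|/\delta$, which is summable. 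The passage from $|\alpha|/|x|$ to $|\alpha|^2/|x|^2$ is precisely what converts a logarithmically divergent integral into a convergent one, and is the missing idea in your argument.
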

  
  Of course, Theorem \ref{ThFanXia2012} is contained in this one.

\subsection{Preliminaries}

  Before we start the proof of the mains theorem, we will need some material.
  Our first statement is a folklore result. A proof of it in the Hilbert space case using Lomonosov's Theorem can be found in \cite[Proposition 4.1]{Fang_Xia_2012}.
\begin{proposition}
  \label{PropPplusKaSHI}
  Let $P \in \B{X}$ be an idempotent such that $\dim(P(X)) = \dim((I-P)(X)) = \infty $. Then for any compact operator $L$, the operator $P+L$ has a non-trivial hyperinvariant subspace.
\end{proposition}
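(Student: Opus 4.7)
The plan is to exhibit a nontrivial Riesz spectral projection of $T = P + L$; any such projection belongs to the bicommutant of $T$, and so its range is automatically a nontrivial hyperinvariant subspace of $T$.

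First, I would compute $\Spec{P} = \Spece{P} = \lbrace 0, 1 \rbrace$. For $\lambda \notin \lbrace 0, 1 \rbrace$, the decomposition $X = P(X) \oplus (I-P)(X)$ yields the explicit inverse $(P - \lambda)^{-1} = \frac{P}{1 - \lambda} - \frac{I-P}{\lambda}$, while at $\lambda = 0$ (resp.\ $\lambda = 1$) the kernel of $P - \lambda$ equals $(I-P)(X)$ (resp.\ $P(X)$), which is infinite-dimensional by hypothesis, so $P - \lambda$ fails to be Fredholm. Since the essential spectrum is invariant under compact perturbations, $\Spece{T} = \lbrace 0, 1 \rbrace$ as well.

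Next, for every $\lambda \in \C \setminus \lbrace 0, 1 \rbrace$, the operator $T - \lambda = (P - \lambda) + L$ is a compact perturbation of an invertible operator, hence Fredholm of index zero. Since $\C \setminus \lbrace 0, 1 \rbrace$ is connected and $T - \lambda$ is invertible for $|\lambda|$ large, the analytic Fredholm alternative shows that $\Spec{T} \setminus \lbrace 0, 1 \rbrace$ is a discrete subset of $\C \setminus \lbrace 0, 1 \rbrace$ consisting of eigenvalues of finite algebraic multiplicity, accumulating only at $\lbrace 0, 1 \rbrace$.

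To conclude, two cases arise. If some $\lambda_0 \in \Spec{T} \setminus \lbrace 0, 1 \rbrace$ exists, then $\lambda_0$ is isolated in $\Spec{T}$ and its Riesz projection has nonzero finite-dimensional range, producing a nontrivial hyperinvariant subspace of $T$. Otherwise $\Spec{T} = \lbrace 0, 1 \rbrace$ with both points isolated; then the Riesz projection $E$ at $\lbrace 0 \rbrace$ satisfies $E(X) \ne \lbrace 0 \rbrace$ (since $0 \in \Spec{T|_{E(X)}}$) and $E(X) \ne X$ (since $1 \in \Spec{T|_{(I-E)(X)}}$), so $E(X)$ is again a nontrivial hyperinvariant subspace. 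The main technical step I expect is the appeal to the analytic Fredholm theorem, which is what guarantees that every $\lambda_0 \in \Spec{T} \setminus \Spece{T}$ is isolated in $\Spec{T}$ with finite-dimensional Riesz projection.
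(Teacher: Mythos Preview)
Your argument is correct and follows essentially the same strategy as the paper: show that $\Spece{T}=\{0,1\}$ via stability of the essential spectrum, argue that any spectrum outside $\{0,1\}$ consists of isolated eigenvalues, and then invoke the Riesz--Dunford calculus to produce a nontrivial hyperinvariant subspace. The paper's version is slightly more compressed: it disposes of the eigenvalue case at the outset (the kernel of $T-\lambda$ is already hyperinvariant) and then appeals to Weyl's theorem in the form $\Spec{P+L}\subset\Spec{P}\cup\Specp{P+L}$ to conclude immediately that $\Spec{T}=\{0,1\}$, whereas you reach the same dichotomy via the analytic Fredholm alternative. Both routes are standard and equivalent here; your use of Riesz projections even in the isolated-eigenvalue case is a minor cosmetic difference from the paper's direct use of the eigenspace.
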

  
\begin{proof}
  First, note that if $\Specp{P+L} \ne \emptyset $, then $P+L$ has a non trivial hyperinvariant subspace. Suppose that $\Specp{P+L} = \emptyset $. By Weyl's Theorem (see for instance \cite[Chapter 0, Theorem 0.10]{Radjavi_Rosenthal_1973}), we have that $\Spec{P+L} \subset \Spec{P} \cup \Specp{P+L} = \Spec{P} = \lbrace 0,1 \rbrace$. As $\lbrace 0,1 \rbrace = \Spece{P} \subset \Spec{P+L}$, we get that $\Spec{P+L} = \lbrace 0,1 \rbrace $. So by the Riesz-Dunford functional calculus, we infer that $P+L$ has a non trivial hyperinvariant subspace.
\end{proof}

  The next statement is a well known fact. The reader can find a proof in \cite{Hille_1948}.

  \begin{proposition}
  \label{PropIntegralOfKompactOperatorsIsCompact}
  Let $\Gamma$ be a rectifiable piecewise smooth Jordan curve. If $F: \Gamma \rightarrow \K{X}$ is a continuous application then 
  $$
  L = \integral{\Omega}{F(z)}{z}
  $$
  exists and is a compact operator.
\end{proposition}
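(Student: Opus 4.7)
The plan is to construct $L$ as a Riemann--Stieltjes integral of $F$ along $\Gamma$ and then to invoke closedness of $\K{X}$ inside $\B{X}$ for the operator norm. The argument splits into two standard pieces: (i) Riemann sums of a continuous Banach-space-valued function on a rectifiable curve converge in norm, and (ii) a norm-limit of compact operators is compact.

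First I would fix a parametrization $\gamma:[0,1]\to\Gamma$ of bounded variation whose total variation equals the length $\ell(\Gamma)<\infty$ (using that $\Gamma$ is rectifiable). For a tagged partition $\pi=\{0=t_0<\dots<t_n=1\}$ with tags $t_i^*\in[t_i,t_{i+1}]$ I set
\[
S_\pi \;=\; \sum_{i=0}^{n-1} F(\gamma(t_i^*))\,\bigl(\gamma(t_{i+1})-\gamma(t_i)\bigr).
\]
Since $F$ is continuous on the compact set $\Gamma$, it is uniformly continuous, with some modulus of continuity $\omega(\delta)\to 0$. A standard refinement/common-refinement comparison then gives $\norme{S_\pi-S_{\pi'}}\le 2\omega(\delta)\,\ell(\Gamma)$ whenever both partitions have mesh at most $\delta$. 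Hence the net $(S_\pi)$ is Cauchy in the operator-norm topology of $\B{X}$, and converges to some $L\in\B{X}$, which we define to be $\int_\Gamma F(z)\,dz$.

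Second, each Riemann--Stieltjes sum $S_\pi$ is a finite complex linear combination of the compact operators $F(\gamma(t_i^*))\in\K{X}$, and is therefore itself compact. Because $\K{X}$ is closed in $\B{X}$ with respect to the operator norm, the norm-limit $L$ belongs to $\K{X}$, proving the proposition.

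The only mild obstacle is to justify the Riemann--Stieltjes construction on a merely rectifiable curve. The piecewise smoothness hypothesis takes care of this: it lets us split $\Gamma$ into finitely many $C^1$ arcs, on each of which $\gamma'$ is bounded and the Riemann--Stieltjes integral coincides with the ordinary vector-valued Riemann integral $\int F(\gamma(t))\,\gamma'(t)\,dt$. This reduces the whole estimate to standard Banach-space-valued Riemann integration, and no additional subtlety arises because none of the arguments use anything beyond completeness of $\B{X}$ and closedness of $\K{X}$.
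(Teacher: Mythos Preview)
Your argument is correct and is exactly the classical one: build the integral as a norm limit of Riemann--Stieltjes sums, note that each sum is a finite linear combination of compact operators, and conclude by the closedness of $\K{X}$ in $\B{X}$. The paper does not give its own proof of this proposition; it simply records it as a well-known fact and refers the reader to Hille's book, so there is nothing further to compare.
\medskip
One small cosmetic point: in the displayed formula the integration domain is written as $\Omega$, which is clearly a typo for $\Gamma$; your proof correctly interprets it as the contour integral over $\Gamma$.
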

  

  We recall next a well known result concerning normal operators on complex Hilbert spaces. Its states that every normal operator on an Hilbert space can be seen as a multiplication operator on some measure space. We refer the reader to \cite[Theorem 2.4.5]{Arveson_2002}, for a proof of this result. 
  
\begin{theorem}
  \label{ThSpectralTheorem}
  Let $N \in \B{H} $ be a normal operator on a complex Hilbert space $H$. Then there exists a sigma-finite measure space $(\Omega, \mu)$, a bounded function $f \in \Lp{\infty}(\Omega,\mu) $ and a unitary operator $W: H \rightarrow \Lp{2}(\Omega,\mu) $ such that
  $$
     M_f W =  W N.
  $$  
\end{theorem}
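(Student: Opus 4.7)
The plan is to build the multiplication representation via the standard two-step recipe: first construct a continuous (and hence Borel) functional calculus for $N$, then decompose $H$ into cyclic subspaces for this calculus and glue the resulting $\Lp{2}$ models into a single multiplication operator on one measure space.

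First I would produce the calculus and the spectral measures. Since $N$ is normal, the $C^*$-algebra $\mathcal{A} = C^*(N,I) \subset \B{H}$ is commutative, so by Gelfand--Naimark its maximal ideal space is naturally identified with $\Spec{N} \subset \C$, giving an isometric $*$-isomorphism $\Phi : C(\Spec{N}) \to \mathcal{A}$ with $\Phi(\mathrm{id}) = N$ and $\Phi(\conj{\mathrm{id}}) = N^*$. For each $x \in H$ the functional $f \mapsto \ps{\Phi(f) x}{x}$ is positive on $C(\Spec{N})$, so by the Riesz representation theorem there is a finite positive regular Borel measure $\mu_x$ on $\Spec{N}$ with $\ps{\Phi(f)x}{x} = \integral{\Spec{N}}{f}{\mu_x}$ and $\mu_x(\Spec{N}) = \norme{x}^2$.

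In the cyclic case, i.e.\ when there exists $x \in H$ whose $\mathcal{A}$-orbit $\lbrace \Phi(f)x : f \in C(\Spec{N}) \rbrace$ is dense in $H$, the map $f \mapsto \Phi(f) x$ is an isometry from $C(\Spec{N}) \subset \Lp{2}(\Spec{N}, \mu_x)$ into $H$ (because applying the above identity to $\abs{f}^2$ gives $\norme{\Phi(f)x}^2 = \integral{\Spec{N}}{\abs{f}^2}{\mu_x}$), and it extends by density to a unitary $W_x : \Lp{2}(\Spec{N}, \mu_x) \to H$ satisfying $W_x^{-1} N W_x = M_{\mathrm{id}}$. In general, since $H$ is separable, a Zorn's lemma or exhaustion argument yields an at most countable orthogonal decomposition $H = \bigoplus_{k \in \N} H_k$ into cyclic $N$-reducing subspaces with cyclic vectors $x_k$; applying the cyclic case on each $H_k$ produces unitaries $W_k : \Lp{2}(\Spec{N}, \mu_{x_k}) \to H_k$ intertwining $M_{\mathrm{id}}$ with $N|_{H_k}$.

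Finally I would glue the pieces together. Let $\Omega = \bigsqcup_{k \in \N} \Spec{N}$, equip the $k$-th copy with the measure $\mu_{x_k}$, and define $f(\lambda, k) = \lambda$. Then $\Lp{2}(\Omega, \mu) \simeq \bigoplus_k \Lp{2}(\Spec{N}, \mu_{x_k})$ canonically, so $W = \bigoplus_k W_k$ is a unitary $H \to \Lp{2}(\Omega,\mu)$ intertwining $N$ with $M_f$, i.e.\ $M_f W = W N$. The measure $\mu$ is $\sigma$-finite because each $\mu_{x_k}$ is finite and the index set is countable, and $f \in \Lp{\infty}(\Omega,\mu)$ because $\Spec{N}$ is compact. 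The main technical obstacle is the cyclic decomposition step: one must argue, using the separability of $H$ and the fact that orthogonal complements of $\mathcal{A}$-invariant subspaces are again $\mathcal{A}$-reducing (this uses normality, through $\Phi$ preserving the involution), that iterating the construction of a cyclic summand actually exhausts $H$ in countably many steps. Once the cyclic model and the decomposition are in hand, the assembly and the verifications that $\mu$ is $\sigma$-finite and $f$ is bounded are mechanical.
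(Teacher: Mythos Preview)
Your sketch is the standard Gelfand--Naimark plus cyclic-decomposition proof of the spectral theorem and is correct in outline; this is essentially the argument in Arveson's book that the paper cites. Note, however, that the paper does \emph{not} give its own proof of this statement: it is recalled as a well-known fact with a reference to \cite[Theorem 2.4.5]{Arveson_2002}, so there is no in-paper proof to compare against. One trivial slip in your write-up: you define each $W_k$ as a unitary $\Lp{2}(\Spec{N},\mu_{x_k}) \to H_k$, so $\bigoplus_k W_k$ goes from $\Lp{2}(\Omega,\mu)$ to $H$, not the other way; to match the displayed intertwining relation $M_f W = W N$ you should take $W$ to be the inverse (or adjoint) of $\bigoplus_k W_k$.
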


  Lastly we mention a well known result for compact operators on a Hilbert space. The reader can find a proof of this result in \cite[Chapter VI, Theorem 1.1]{Gohberg_Goldberg_Kaashoek_1990}. 
  
\begin{theorem}
   \label{ThSingularValueDecomposition}
   Let $K \in \K{H} $ be a compact operator on the Hilbert space $H$. Then there exist two orthonormal families $(u_n)_{n \in \N}, (v_n)_{n \in \N} $ of vectors in $H$ and a sequence $(s_n)_{n \in \N}$ of positive real numbers such that $\lim_{n \rightarrow \infty} s_n = 0$, and
  $$
    K = \sum_{n \in \N} s_n \tens{u_n}{v_n}.
  $$
\end{theorem}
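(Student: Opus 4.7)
The plan is to reduce the problem to the spectral theorem for compact self-adjoint operators applied to the operator $K^{*}K$. First I would observe that $K^{*}K \in \K{H}$ is a positive compact self-adjoint operator, hence admits an orthonormal family of eigenvectors $(v_n)_{n \in \N}$ spanning $\Ker(K)^{\perp}$, with strictly positive eigenvalues $\mu_n$ tending to $0$. I would then set $s_n = \sqrt{\mu_n}$ and define $u_n = s_n^{-1} K v_n$; these are the natural candidates for the singular values and the left singular vectors.

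The routine verifications are then: (i) that $(u_n)_{n \in \N}$ is orthonormal, which drops out of the identity $\ps{K v_n}{K v_m} = \ps{K^{*}K v_n}{v_m} = \mu_n \Kro{n}{m}$ after dividing by $s_n s_m$; and (ii) that $K = \sum_{n \in \N} s_n \tens{u_n}{v_n}$. For (ii) I would decompose an arbitrary $x \in H$ as $x_0 + \sum_{n \in \N} \ps{x}{v_n} v_n$ with $x_0 \in \Ker(K)$, apply $K$, and use $K v_n = s_n u_n$ together with $K x_0 = 0$ to obtain the series pointwise. Convergence in operator norm would then follow by bounding the norm of the tail $K - \sum_{n \leq N} s_n \tens{u_n}{v_n}$ by $\sup_{n > N} s_n$, which tends to $0$.

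The hard part is the input itself, namely the spectral theorem for compact self-adjoint operators on a Hilbert space. The nontrivial point there is producing a first eigenvector of $K^{*}K$, which one typically obtains by showing that the quadratic form $x \mapsto \ps{K^{*}K x}{x}$ attains its supremum on the closed unit ball, upgrading a weakly convergent maximising sequence to a norm-convergent one via the compactness of $K^{*}K$. Once a first eigenvector is in hand, the remaining eigenvectors, the completeness on $\Ker(K)^{\perp}$, and the decay $\mu_n \to 0$ all follow by induction on orthogonal complements together with standard compactness arguments. Finally, if $K$ has finite rank one only obtains finitely many positive $s_n$, and the statement should be read as allowing the sum to terminate; otherwise one always has infinitely many strictly positive singular values and $s_n \to 0$ by the compactness of $K^{*}K$.
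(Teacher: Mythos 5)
The paper does not give its own proof of this theorem; it simply cites Gohberg--Goldberg--Kaashoek. Your argument is the standard derivation of the singular value decomposition via the spectral theorem for the compact positive self-adjoint operator $K^{*}K$, and the reduction, the orthonormality of $(u_n)$, the pointwise and operator-norm convergence of the series, and your caveat about the finite-rank case are all correct and exactly what one would find in the cited reference.
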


\section{Proof of Theorem \ref{ThMainTheorem}}



  To prove Theorem \ref{ThMainTheorem}, we will use the same approach as in \cite{Fang_Xia_2012}. The idea is to create, for all $z \in \Gamma$, a "nice" right inversion formula for $T-z$. Then, using some unconventional Riesz-Dunford functional calculus, we will prove that the commutant of $T$ is included in the commutant of a compact perturbation of an idempotent. This last operator will have a non trivial hyperinvariant subspace, and so $T$ will as well.
  We start with some technical results for building the right inversion formula. In this section we will assume that the assumptions of Theorem \ref{ThMainTheorem} are always satisfied. In particular we need to assume that $K= \sum_{n \in \N} s_n \tens{u_n}{v_n}$ is a compact operator (as it is written, $K$ need not  be a compact operator in general). 
  
\begin{lemma}
  \label{LemmaNoEigenvalueImpliesInvertibility}
  Denote by $T= M_f + K$ the compact perturbation of the multiplication operator $M_f$ on the Banach space $\Lp{p}(\Omega,\mu)$. Suppose that assumptions 3 and 4 of Theorem \ref{ThMainTheorem} are satisfied and $\Specp{T} \cap \Gamma = \emptyset $. Then for every $z \in \Gamma$, $I + A(z)(M_f -z) $ is invertible.
\end{lemma}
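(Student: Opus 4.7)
The plan is to prove invertibility of $I + A(z)(M_f - z)$ by combining a factorization identity with the Fredholm alternative. The key step is to establish the algebraic identity
$$
(M_f - z)\bigl(I + A(z)(M_f - z)\bigr) = T - z
$$
as operators on $\Lp{p}(\Omega,\mu)$. Once this is in hand, the hypothesis $\Specp{T} \cap \Gamma = \emptyset$ immediately reduces the question of invertibility of $I + A(z)(M_f - z)$ to a Fredholm-theoretic statement.

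To verify the factorization, I would compute $(M_f - z)A(z)(M_f - z)$ directly from the defining series of $A(z)$. Using assumption 3 to represent $(M_f - z)^{-1}u_n$ and $(M_{\conj f} - \conj z)^{-1}v_n$ as the pointwise quotients by $f - z$ and $\conj f - \conj z$ respectively, the crucial simplification is
$$
\ps{(M_f - z)x}{(M_{\conj f} - \conj z)^{-1} v_n} = \integral{\Omega}{(f-z)x \cdot \frac{\conj{v_n}}{f - z}}{\mu} = \ps{x}{v_n},
$$
where the cancellation of $f - z$ in the numerator and denominator is legitimate because assumption 2, $\mu(f^{-1}(\Gamma)) = 0$, forces $f - z \ne 0$ $\mu$-almost everywhere. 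After this cancellation, multiplying by $(M_f - z)$ on the left simply strips off the factor $(M_f - z)^{-1}$ in front of $u_n$, yielding $\sum_n s_n \ps{x}{v_n} u_n = Kx$. The factorization then reads $(M_f - z) + K = T - z$.

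Given the factorization, the remainder is routine. The operator $A(z)$ is compact by assumption 4, so $A(z)(M_f - z)$ is compact as well (compact composed with bounded), and hence $I + A(z)(M_f - z)$ is a compact perturbation of the identity. By the Fredholm alternative it is invertible if and only if it is injective. Suppose $(I + A(z)(M_f - z))x = 0$; applying $(M_f - z)$ on the left and invoking the factorization gives $(T - z)x = 0$, and since $z \notin \Specp{T}$ this forces $x = 0$.

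The only delicate point is to ensure that the manipulation of $(M_f - z)A(z)(M_f - z)$ is rigorous even though $(M_f - z)^{-1}$ need not be a bounded operator. However, assumption 4 guarantees that $A(z)$ is a well-defined compact operator, so the defining series converges strongly when evaluated at any $x$, and applying the bounded operator $M_f - z$ then allows term-by-term simplification inside the sum without any convergence subtleties.
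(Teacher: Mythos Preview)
Your proof is correct and follows essentially the same route as the paper: both arguments use the Fredholm alternative for the compact perturbation $I + A(z)(M_f - z)$ and the key identity $(M_f - z)A(z)(M_f - z) = K$ to conclude that a nonzero kernel element would produce an eigenvalue of $T$ on $\Gamma$. The paper argues by contradiction directly on a vector $h$ in the kernel, while you package the same computation as the operator factorization $(M_f - z)\bigl(I + A(z)(M_f - z)\bigr) = T - z$; this is a purely cosmetic difference.

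One small remark: you invoke assumption~2 ($\mu(f^{-1}(\Gamma)) = 0$) to justify the pointwise cancellation of $f - z$, but the lemma as stated only assumes~3 and~4. This is harmless since the identity $\bigl\langle (M_f - z)x,\, (M_{\conj f} - \conj z)^{-1}v_n \bigr\rangle = \langle x, v_n\rangle$ already follows from assumption~3 alone: $(M_{\conj f} - \conj z)^{-1}v_n$ is by definition a preimage of $v_n$ under $(M_f - z)^* = M_{\conj f} - \conj z$, so the adjoint relation gives the cancellation without any pointwise division argument.
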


\begin{proof}
  Suppose that for some $z \in \Gamma$, $I + A(z)(M_f -z) $ is not invertible. As $A(z)$ is compact and $M_f -z $ is a bounded operator, we have that $ A(z)(M_f -z) $ is compact. So $-1 \in \Specp{A(z)(M_f -z)} $. Hence there exists $h \in \Lp{p}(\Omega,\mu) $ such that $h \ne 0 $ and $A(z)(M_f -z)h = -h $. We have that
\begin{align*}
  -h &= A(z)(M_f -z) h \\
     &= \left( \sum_{n \in \N} s_n \tens{ \left( (M_f-z)^{-1}u_n \right) }{\left( (M_{\conj{f}} -\conj{z})^{-1} v_n \right)} \right) (M_f -z) h \\
     &= \left( \sum_{n \in \N} s_n \tens{ \left( (M_f-z)^{-1}u_n \right) }{ v_n } \right) h.
\end{align*}
  Applying $ (M_f-z) $ on each side of the equality, we obtain 
  $$
  -(M_f-z)h =  \left( \sum_{n \in \N} s_n \tens{ u_n }{ v_n } \right) h = K h. 
  $$
  So we have that $zh = (M_f + K) h = Th $, thus $z \in \Specp{T} \cap \Gamma $ which is a contradiction with the assumption that $ \Specp{T} \cap \Gamma = \emptyset $.
\end{proof}

  The following lemma is a straightforward corollary of Lemma \ref{LemmaNoEigenvalueImpliesInvertibility}.

\begin{lemma}
  \label{LemBIsAContinousMapToCompactOperator}
  Suppose that assumptions 3 and 4 of Theorem \ref{ThMainTheorem} are satisfied and $ \Specp{T} = \emptyset $. Then for all $ z \in \Gamma $, $B(z) = \big( I + A(z)(M_f -z) \big)^{-1} A(z) $ is a compact operator. Moreover the application
  \begin{align*}
    B: \Gamma &\rightarrow \K{\Lp{p}(\Omega,\mu)} \\
        z & \mapsto B(z)
  \end{align*}
  is continuous.
\end{lemma}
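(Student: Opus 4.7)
The plan is to exploit that the lemma is advertised as a straightforward corollary of Lemma \ref{LemmaNoEigenvalueImpliesInvertibility}. Under the hypothesis $\Specp{T} = \emptyset$, that lemma applies at every point of $\Gamma$, so $I + A(z)(M_f - z)$ is invertible for every $z \in \Gamma$ and $B(z)$ is well defined. Since $A(z)$ is compact by assumption 4 and $(I + A(z)(M_f-z))^{-1}$ is bounded, the product $B(z)$ is compact because the compact operators form a two-sided ideal of $\B{\Lp{p}(\Omega,\mu)}$.

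For the continuity claim, I would decompose $B$ as a composition of norm-continuous operations. The map $z \mapsto M_f - z$ is trivially norm-continuous from $\C$ to $\B{\Lp{p}(\Omega,\mu)}$ since consecutive values differ by a scalar multiple of the identity. Combined with the continuity of $A : \Gamma \to \K{\Lp{p}(\Omega,\mu)}$ granted by assumption 4, and the joint continuity of operator multiplication on bounded sets, this gives norm-continuity of $z \mapsto I + A(z)(M_f - z)$ into $\B{\Lp{p}(\Omega,\mu)}$. Composing with the inversion map, which is norm-continuous on the open set of invertibles by a standard Neumann series argument, yields continuity of $z \mapsto \bigl(I + A(z)(M_f - z)\bigr)^{-1}$. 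Multiplying this bounded-operator-valued continuous map by the compact-operator-valued continuous map $A$ and invoking the estimate $\|S_1 A_1 - S_2 A_2\| \le \|S_1 - S_2\| \|A_1\| + \|S_2\| \|A_1 - A_2\|$ delivers continuity of $B : \Gamma \to \K{\Lp{p}(\Omega,\mu)}$ in the compact-operator norm.

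There is no real obstacle here; the only point requiring thought is the continuity of the inversion map on the group of invertibles, which is well known. Everything else is a formal chaining of continuities using joint continuity of multiplication and the ideal structure of $\K{\Lp{p}(\Omega,\mu)}$.
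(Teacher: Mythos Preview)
Your argument is correct and is exactly the sort of straightforward verification the paper has in mind: it merely states that the lemma is a direct corollary of Lemma~\ref{LemmaNoEigenvalueImpliesInvertibility} and gives no further details. Your decomposition via continuity of $z\mapsto M_f-z$, continuity of $A$, joint continuity of multiplication, and continuity of inversion on invertibles is the natural way to fill in those details.
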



  
  Our next lemma is 
  
\begin{lemma}
  \label{LemDensity}
  Let $\Gamma$ be a rectifiable piecewise smooth Jordan curve such that assumption 2 of Theorem \ref{ThMainTheorem} is satisfied. Let $\mathcal{L} \subset \Lp{p}(\Omega,\mu)$ be the linear manifold of all finite linear combination of indicator functions of measurable sets $S_i $ such that $f(S_i)$ is at a strictly positive distance of $\Gamma$. Let $W= \cap_{z \in \Gamma} \Range(M_f-z)$. Then $\mathcal{L}$ and $W$ are dense in $\Lp{p}(\Omega, \mu)$.
\end{lemma}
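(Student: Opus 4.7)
The plan is to first prove that $\mathcal{L}$ is dense in $L_p(\Omega,\mu)$ by a standard truncation argument that exploits assumption 2, and then to show the simpler inclusion $\mathcal{L} \subset W$, which immediately gives the density of $W$.

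For the density of $\mathcal{L}$, I would start from the fact that simple functions of the form $g = \sum_{i=1}^N c_i \ind{S_i}$ with $\mu(S_i) < \infty$ are dense in $L_p(\Omega,\mu)$ since $\mu$ is $\sigma$-finite. Given such a $g$, define the closed sets $F_n = \{z \in \C : \mathrm{dist}(z,\Gamma) \geq 1/n\}$ and put $S_i^{(n)} = S_i \cap f^{-1}(F_n)$. Each $f(S_i^{(n)})$ is at distance at least $1/n$ from $\Gamma$, so $g_n := \sum_{i=1}^N c_i \ind{S_i^{(n)}}$ belongs to $\mathcal{L}$. Since $\bigcup_n F_n = \C \setminus \Gamma$ and $\mu(f^{-1}(\Gamma)) = 0$ by assumption 2, we have $\ind{S_i^{(n)}} \to \ind{S_i}$ pointwise $\mu$-a.e., and the bound $|g_n| \leq \sum_i |c_i| \ind{S_i} \in L_p$ allows us to conclude by dominated convergence that $g_n \to g$ in $L_p(\Omega,\mu)$.

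For the inclusion $\mathcal{L} \subset W$, let $g = \ind{S} \in \mathcal{L}$ with $\delta := \mathrm{dist}(f(S),\Gamma) > 0$, and fix $z \in \Gamma$. Define
$$
h(\xi) = \begin{cases} \frac{1}{f(\xi)-z} & \text{if } \xi \in S, \\ 0 & \text{otherwise.} \end{cases}
$$
Then $|h| \leq \frac{1}{\delta} \ind{S}$, so $h \in L_p(\Omega,\mu)$, and $(M_f - z)h = \ind{S} = g$. Hence $g \in \Range(M_f - z)$ for every $z \in \Gamma$, which means $g \in W$. By linearity this extends to all of $\mathcal{L}$, giving $\mathcal{L} \subset W$. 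Combined with the first paragraph, this shows that $W$ is dense in $L_p(\Omega,\mu)$.

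There is no real obstacle here; the argument is essentially measure-theoretic bookkeeping and relies only on assumption 2 (to control the exceptional set $f^{-1}(\Gamma)$) and on the observation that for sets $S$ whose image avoids a neighborhood of $\Gamma$, the naive inverse $(f-z)^{-1}\ind{S}$ is automatically uniformly bounded and hence in $L_p$. The same proof in fact shows that $\ind{S} \in W$ whenever $f(S)$ is bounded away from $\Gamma$, so $\mathcal{L}$ and $W$ could be merged into a single density statement, but the two-step formulation above is convenient for later use of $\mathcal{L}$ as a concrete dense manifold.
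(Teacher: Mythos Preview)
Your proof is correct and follows essentially the same route as the paper: approximate indicator functions by truncating to sets where $f$ stays at positive distance from $\Gamma$ (the paper uses $B_\varepsilon = \{\xi \in B : \mathrm{dist}(f(\xi),\Gamma) > \varepsilon\}$, which is exactly your $S_i^{(n)}$), invoke assumption~2 to get pointwise convergence, and then use $\mathcal{L} \subset W$. If anything, your write-up is more complete, since you actually prove the inclusion $\mathcal{L} \subset W$ by exhibiting the preimage $h = (f-z)^{-1}\ind{S}$, whereas the paper simply asserts it.
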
  
  
\begin{proof}
   We have that $w \in \mathcal{L}$ if and only if there exist $a_1, \dots, a_r \in \C $ and $S_1, \dots, S_r$ measurable subsets of $\Omega$ such that $w= \sum_{i=1}^r a_i \ind{S_i}$ and $\inf_{\xi \in S_i, z \in \Gamma} \abs{f(\xi) - z} > 0 $ for each $i=1, \dots, r$.

  In order to prove that the closure of $\mathcal{L}$ is $\Lp{p}(\Omega, \mu) $, we just need to prove that all indicator function of measurable sets are in the closure of $\mathcal{L}$,  because the linear manifold of all finite linear combination of indicator function is dense in $ \Lp{p}(\Omega, \mu) $. Let $B$ a measurable subset of $ \Omega $ and denote by $ B_\varepsilon = \lbrace \xi \in B, \textrm{dist}(f(\xi),\Gamma) > \varepsilon \rbrace $. We have that $ \ind{B_\varepsilon} $ goes to $\ind{B} $ as $\varepsilon$ goes to $0$ (because $\mu(f^{-1}(\Gamma))=0$) and $ \ind{B_\varepsilon} \in \mathcal{L} $. 
  
   Then the closure of $\mathcal{L}$ is $\Lp{p}(\Omega, \mu) $. As $\mathcal{L} \subset W $, the closure of $W $ is $\Lp{p}(\Omega, \mu)$ as well.
\end{proof}  
  
  Next comes the following analogue of Lemma 3.4 of \cite{Fang_Xia_2012}.  
  
\begin{lemma}
  \label{LemmaRightInversionFormula}
   With the notations of Lemma \ref{LemBIsAContinousMapToCompactOperator}, for all $z \in \Gamma$, denote by $R(z)$ the (possibly unbounded) operator defined by $R(z) = (M_f - z)^{-1} - B(z) $. Then for every $w \in W$ we have that
   $$
   (T-z)R(z)w = w.
   $$
\end{lemma}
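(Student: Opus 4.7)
The plan is to verify the identity by direct computation, reducing it to a single algebraic fact about the construction of $A(z)$. First, I would observe that for $w \in W$, both $(M_f-z)^{-1}w$ (by definition of $W$) and $B(z)w$ (since $B(z)$ is bounded by Lemma~\ref{LemBIsAContinousMapToCompactOperator}) lie in $\Lp{p}(\Omega,\mu)$, so $R(z)w$ is well-defined. Expanding the left-hand side,
\[
(T-z)R(z)w = (M_f-z+K)(M_f-z)^{-1}w - (T-z)B(z)w = w + K(M_f-z)^{-1}w - (T-z)B(z)w,
\]
so the whole problem reduces to showing $(T-z)B(z)w = K(M_f-z)^{-1}w$ for every $w \in W$.

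The crucial step is to establish the identity $(M_f-z)A(z)(M_f-z) = K$ as bounded operators on $\Lp{p}(\Omega,\mu)$; informally, $A(z)$ is a \emph{sandwich pseudo-inverse} of $K$ with respect to $M_f-z$. I would verify this directly: for $h\in \Lp{p}(\Omega,\mu)$, evaluating $A(z)$ on $(M_f-z)h = (f-z)h$, the factor $(f-z)$ cancels pointwise against $\overline{(M_{\conj{f}}-\conj{z})^{-1}v_n} = \conj{v_n}/(f-z)$ (legitimately, since $\mu(f^{-1}(\Gamma))=0$ by assumption 2), so that each coefficient in the series defining $A(z)(M_f-z)h$ becomes $\int h\, \conj{v_n}\, d\mu$. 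Multiplying the resulting $\Lp{p}$-convergent series by the bounded function $f-z$ then turns each $(M_f-z)^{-1}u_n$ back into $u_n$, and the total equals $Kh$.

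With this identity in hand, I would apply it with $h = B(z)w$ to obtain $KB(z)w = (M_f-z)A(z)(M_f-z)B(z)w$, and combine with the defining relation $(I + A(z)(M_f-z))B(z) = A(z)$ coming from Lemma~\ref{LemBIsAContinousMapToCompactOperator} to compute
\[
(T-z)B(z)w = (M_f-z)B(z)w + KB(z)w = (M_f-z)\bigl(I+A(z)(M_f-z)\bigr)B(z)w = (M_f-z)A(z)w.
\]
Finally, for $w \in W$, setting $h = (M_f-z)^{-1}w \in \Lp{p}(\Omega,\mu)$ so that $w = (M_f-z)h$, one more application of the sandwich identity yields $(M_f-z)A(z)w = (M_f-z)A(z)(M_f-z)h = Kh = K(M_f-z)^{-1}w$, which is exactly the desired equality.

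The only real subtlety is that $(M_f-z)^{-1}$ is typically an unbounded operator, so one must check at each step that all compositions make sense as elements of $\Lp{p}(\Omega,\mu)$. Restricting to $w \in W$ removes this concern on the initial application of $(M_f-z)^{-1}$, and every remaining expression involving $A(z)$ or $B(z)$ is sandwiched between multiplication by the bounded function $f-z$ and the bounded (compact) operator $A(z)$ or $B(z)$, so the manipulations stay inside $\Lp{p}(\Omega,\mu)$ throughout. Once this bookkeeping is in place, the proof reduces to the short algebraic verification built around the single identity $(M_f-z)A(z)(M_f-z) = K$.
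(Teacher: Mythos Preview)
Your proof is correct and follows essentially the same approach as the paper: both hinge on the identity $(M_f-z)A(z)(M_f-z)=K$, which the paper also verifies directly from the definition of $A(z)$. The paper's write-up is marginally more compact because it uses this identity to factor $T-z=(M_f-z)\bigl(I+A(z)(M_f-z)\bigr)$ at the outset and then cancels in a single line, whereas you split the computation into the two pieces $K(M_f-z)^{-1}w$ and $(T-z)B(z)w$; but the substance is identical.
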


 In this lemma, $R(z)$ can be an unbounded operator because $(M_f - z)^{-1} $ can be unbounded if $z \in  \Spec{M_f} \cap \Gamma $. According to Lemma \ref{LemBIsAContinousMapToCompactOperator}, $B(z)$ is a compact operator for each $ z \in \Gamma $.

\begin{proof}
  Let $ w \in W$ and $z \in \Gamma $. Observe that
\begin{align*}
  (M_f -z)A(z)(M_f - z) 
    &= (M_f -z)\left( \sum_{n \in \N} s_n \tens{ \left( (M_f-z)^{-1}u_n \right) }{\left( (M_{\conj{f}} -\conj{z})^{-1} v_n \right)} \right)(M_f - z) \\
    &= \sum_{n \in \N} s_n \tens{u_n}{v_n} \\
    &= K.
\end{align*}
  For all $w \in W \subset \Range(M_f - z)$ it makes senses to write $R(z)w$. Replacing $K$ by this expression, we have that
\begin{align*}
  (T-z)R(z)w
    &= (M_f -z + K)\left( (M_f -z)^{-1} - \big( I + A(z)(M_f -z) \big)^{-1} A(z) \right)w \\
    &= (M_f -z)\Big(I + A(z)(M_f -z)\Big)\left( (M_f -z)^{-1} - \big( I + A(z)(M_f -z) \big)^{-1} A(z) \right)w \\
    &= (M_f -z)\left( (M_f -z)^{-1} + A(z) - A(z) \right)w \\
    &= w,
\end{align*}
which proves Lemma \ref{LemmaRightInversionFormula}.
\end{proof}

\begin{lemma}
  \label{LemSWsubsetW}
  Let $S \in \Com{T}$ and $w \in W$. Then $Sw \in W$. 
\end{lemma}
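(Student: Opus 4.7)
The plan is to show that $Sw \in \Range(M_f - z)$ for every $z \in \Gamma$, which by definition of $W$ immediately gives $Sw \in W$. The argument rests on two ingredients already in hand: the right inversion identity $(T-z)R(z)w = w$ of Lemma \ref{LemmaRightInversionFormula}, which applies because $w \in W$; and the algebraic identity $K = (M_f - z) A(z) (M_f - z)$ as a bounded operator on $\Lp{p}(\Omega, \mu)$, which was verified inside the proof of that same lemma and is legitimate because $A(z)$ is bounded (even compact, by assumption~4).

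First I apply $S$ to both sides of the right inversion identity. Since $S$ commutes with $T$ and hence with $T - z$, this yields $Sw = (T - z)\bigl(SR(z)w\bigr)$. Here $R(z)w$ is a genuine element of $\Lp{p}(\Omega,\mu)$: in $R(z)w = (M_f - z)^{-1} w - B(z) w$ the first summand makes sense because $w \in W \subset \Range(M_f - z)$ and $M_f - z$ is injective on $\Lp{p}(\Omega, \mu)$ (assumption~2 forces $\mu(f^{-1}(\{z\})) = 0$ for $z \in \Gamma$), while the second is the image of $w$ under the bounded operator $B(z)$ provided by Lemma \ref{LemBIsAContinousMapToCompactOperator}. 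Set $g = SR(z)w \in \Lp{p}(\Omega,\mu)$.

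Splitting $T - z = (M_f - z) + K$ and invoking the factorization of $K$ gives
\[ Sw = (M_f - z) g + K g = (M_f - z) g + (M_f - z) A(z) (M_f - z) g, \]
which factors as $(M_f - z)\bigl(I + A(z)(M_f - z)\bigr)g$ and therefore lies in $\Range(M_f - z)$. Since $z \in \Gamma$ was arbitrary, $Sw \in W$, as required. The argument is purely algebraic; the only technical point to verify is that each intermediate vector lies in $\Lp{p}(\Omega, \mu)$, which is guaranteed by boundedness of $M_f$, $A(z)$, and $B(z)$. I do not expect any serious obstacle.
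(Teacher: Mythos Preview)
Your proof is correct and follows essentially the same route as the paper: start from $Sw = S(T-z)R(z)w = (T-z)SR(z)w$, then use the factorization $K = (M_f - z)A(z)(M_f - z)$ to pull out $(M_f - z)$ and conclude $Sw \in \Range(M_f - z)$. The only difference is cosmetic --- you write the resulting vector as $(I + A(z)(M_f - z))g$ whereas the paper leaves it expanded --- and you supply a little extra justification for why $R(z)w$ is a genuine element of $\Lp{p}(\Omega,\mu)$.
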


\begin{proof}
  Let $S \in \Com{T}$, $z \in \Gamma$ and $w \in W$. Using in the fourth equality the fact that $ K = (M_f -z)A(z)(M_f - z) $, we have that
  \begin{align*}
    Sw
      &= S(T-z)R(z)w \\
      &= (T-z)SR(z)w \\
      &= (M_{f}-z)SR(z)w + KSR(z)w \\
      &= (M_{f}-z)SR(z)w + (M_f -z)A(z)(M_f - z)SR(z)w \\
      &= (M_{f}-z)\Big( SR(z)w + A(z)(M_f - z)SR(z)w \Big).
  \end{align*}  
  So $Sw \in \Range(M_f -z) $.
\end{proof}

\begin{proposition}
  \label{PropIntegralOfMultiplicationIsProjector}
  Let $ \Gamma$ satisfy assumptions 1 and 2 of Theorem \ref{ThMainTheorem}. Denote by $\Theta $ the interior of $\Gamma$. Then for all $w \in \mathcal{L} $ we have
  $$
  M_{\ind{f^{-1}(\Theta)}}w = \frac{-1}{2\I\pi}\integral{\Gamma}{(M_f -z)^{-1}w \; }{z},
  $$
  with $\Gamma$ oriented in the counter clockwise direction. 
  Moreover, if there exist $a,b \in \Spece{M_f} $ such that $a \in \Theta $ and $b \notin \Theta \cup \Gamma $, then $ \dim(\Range(M_{\ind{f^{-1}(\Theta)}})) = \dim(\Range(I - M_{\ind{f^{-1}(\Theta)}})) = \infty $. 
\end{proposition}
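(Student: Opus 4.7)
My first step is to reduce the integral formula by linearity to the case $w = \ind{S}$ for a single measurable set $S$ with $d := \inf_{\xi \in S,\, z \in \Gamma} \abs{f(\xi)-z} > 0$; both sides of the claimed identity are linear in $w$. For such a $w$ and any $z \in \Gamma$, the function $(M_f - z)^{-1}w$ is simply $\xi \mapsto (f(\xi)-z)^{-1}\ind{S}(\xi)$, of modulus at most $1/d$ on $S$. Since $\mu(S) < \infty$ (because $\ind{S} \in \Lp{p}(\Omega,\mu)$) and $\Gamma$ has finite length, the map $z \mapsto (M_f-z)^{-1}w$ is continuous from $\Gamma$ into $\Lp{p}(\Omega,\mu)$, so the right-hand side is a well-defined Bochner integral.

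Next I would apply a Fubini-type argument to interchange this Bochner integral over $\Gamma$ with the evaluation at $\mu$-almost every $\xi \in \Omega$. The integrand $(z,\xi) \mapsto (f(\xi)-z)^{-1}\ind{S}(\xi)$ is uniformly bounded by $1/d$ on $\Gamma \times S$, with $\Gamma$ of finite length and $\mu(S) < \infty$, so the exchange is legitimate. For each fixed $\xi \in S$, Cauchy's integral formula gives
$$
  \frac{-1}{2\I\pi}\integral{\Gamma}{\frac{1}{f(\xi)-z}}{z} = \ind{\Theta}(f(\xi)),
$$
since $f(\xi) \notin \Gamma$ on the relevant set and $\Gamma$ is oriented counter-clockwise. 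Multiplying by $\ind{S}(\xi)$ yields $(M_{\ind{f^{-1}(\Theta)}}\ind{S})(\xi)$, and by assumption~2 the set on which $f(\xi) \in \Gamma$ is $\mu$-null, so equality holds in $\Lp{p}(\Omega,\mu)$.

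For the infinite-dimensional claim, I would use the standard characterisation that $\lambda \in \Spece{M_f}$ if and only if, for every $\varepsilon > 0$, the subspace $\ind{\lbrace \abs{f-\lambda}<\varepsilon\rbrace}\Lp{p}(\Omega,\mu)$ is infinite-dimensional: otherwise, decomposing $\Lp{p}$ according to $\lbrace\abs{f-\lambda}<\varepsilon\rbrace$ and its complement, $M_f - \lambda$ would be a finite-rank perturbation of an invertible operator and hence Fredholm. Since $\Theta$ is open and $a \in \Theta$, pick $\varepsilon > 0$ with $\overline{B(a,\varepsilon)} \subset \Theta$; then $\lbrace\abs{f-a}<\varepsilon\rbrace \subset f^{-1}(\Theta)$, so $\Range(M_{\ind{f^{-1}(\Theta)}})$ contains the infinite-dimensional subspace $\ind{\lbrace\abs{f-a}<\varepsilon\rbrace}\Lp{p}(\Omega,\mu)$. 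The same argument applied to $b$ in the open exterior of $\Gamma$, together with $I - M_{\ind{f^{-1}(\Theta)}} = M_{\ind{\Omega \setminus f^{-1}(\Theta)}}$, gives infinite-dimensionality of $\Range(I - M_{\ind{f^{-1}(\Theta)}})$.

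The only genuinely delicate point is the Fubini exchange between the Bochner integral over $\Gamma$ and pointwise evaluation at $\xi \in \Omega$: one should either cite a standard statement identifying such a Bochner integral with the pointwise iterated integral of a jointly measurable, uniformly bounded function, or justify it by approximating the Bochner integral by Riemann sums (which converge in $\Lp{p}$ along a refining sequence of partitions and, after extraction, $\mu$-almost everywhere on $\Omega$) and then passing to the limit.
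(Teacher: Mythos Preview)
Your proof is correct and follows essentially the same route as the paper: a pointwise application of Cauchy's integral formula to indicator functions, and for the dimension claim the observation that for $a\in\Spece{M_f}$ the idempotent $M_{\ind{f^{-1}(B(a,\varepsilon))}}$ cannot have finite rank (equivalently, cannot be compact), since otherwise $M_f-a$ would be Fredholm. The only difference is that you discuss the Bochner/Fubini exchange explicitly, which the paper leaves implicit; your Riemann-sum justification is adequate.
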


  Note that $M_{\ind{f^{-1}(\Theta)}} $ is an idempotent (i.e. $ (M_{\ind{f^{-1}(\Theta)}})^2 = M_{\ind{f^{-1}(\Theta)}} $).

\begin{proof}
  Let $w \in \mathcal{L}$. So there exist $a_1, \dots, a_r \in \C $ and $S_1, \dots, S_r$ measurable subsets of $\Omega$ such that $w= \sum_{i=1}^r a_i \ind{S_i}$ and $\inf_{\xi \in S_i, z \in \Gamma} \abs{f(\xi) - z} > 0 $ for each $i=1, \dots, r$. As $\mu(f^{-1}(\Gamma))=0 $, 
  we have for $\mu$-almost every $\xi \in \Omega $ that $f(\xi) \notin \Gamma $ and
\begin{align*}
   \frac{1}{2\I\pi}\integral{\Gamma}{(M_f -z)^{-1}w(\xi)}{z}
     &= \sum_{i=1}^r \frac{1}{2\I\pi} \integral{\Gamma}{\frac{a_i \ind{S_i}(\xi)}{f(\xi) - z}}{z} \\
     &= \sum_{i=1}^r a_i \ind{S_i}(\xi) \frac{1}{2\I\pi} \integral{\Gamma}{\frac{1}{f(\xi) - z}}{z} \\
     &= - \sum_{i=1}^r a_i \ind{S_i}(\xi) \ind{\Theta}(f(\xi)) \\
     &= - M_{\ind{f^{-1}(\Theta)}} w (\xi).
\end{align*}

  Now we will prove that $a \in \Spece{M_f} \cap \Theta $ implies that $\dim(\Range(M_{\ind{f^{-1}(\Theta)}})) = \infty $. A similar argument works for the other assertion. First note that for every compact operator $L \in \K{ \Lp{p}(\Omega,\mu)} $, we have $a \in \Spec{M_f + L} $. In other words, $M_f + L - aI $ does not have a bounded inverse. Fix $\varepsilon > 0 $ and denote by $B$ the disk $B = \lbrace w \in \C, \abs{a - w} < \varepsilon \rbrace $. Denote by $\tilde{f} = f - (f - a - \varepsilon)\ind{f^{-1}(B)} $. If $\abs{f(\xi)-a} \ge \varepsilon $, then $\tilde{f}(\xi) - a  = f(\xi) - a $. Otherwise $\tilde{f}(\xi) - a  = \varepsilon $. Now $ \tilde{f}$ is a bounded function and $\tilde{f}-a $ is bounded away from zero (i.e. there exists a constant $c>0$ such that for almost every $\xi \in \Omega$, $\abs{\tilde{f}(\xi) - a} \ge c > 0  $). So $\frac{1}{\tilde{f} - a} $ is a bounded function and 
  $$
  M_{\frac{1}{\widetilde{f} - a}} = (M_{\tilde{f}} -a)^{-1} = (M_f-M_{f-a-\varepsilon} M_{\ind{f^{-1}(B)}} -a )^{-1}
  $$ 
  is a bounded operator. If $M_{\ind{f^{-1}(B)}}$ were a compact operator then $M_{\tilde{f}} -a $ would not be invertible. So  $M_{\ind{f^{-1}(B)}}$ is not a compact idempotent and $\dim(\Range(M_{\ind{f^{-1}(B)}})) = \infty $. If we choose $\varepsilon $ small enough we have that  $ \Range(M_{\ind{f^{-1}(B)}}) \subset \Range(M_{\ind{f^{-1}(\Theta)}}) $, so $\dim(\Range(M_{\ind{f^{-1}(\Theta)}})) = \infty $.
\end{proof}

\begin{proof}[Proof of Theorem \ref{ThMainTheorem}]
  Suppose that $\Specp{T} = \emptyset$. Recall that for all $z \in \Gamma$, $B(z) = \big(I+A(z)(M_f -z)\big)^{-1}A(z)$ and $R(z) =  (M_f -z)^{-1} - B(z)$. Then by Lemma \ref{LemBIsAContinousMapToCompactOperator}, $B(z)$ is a compact operator and the application $B: \Gamma \rightarrow \K{X}$ is continuous. So $\norme{B(z)}$ is bounded on the compact set $\Gamma$ and we have
  $$
     \integral{\Gamma}{\norme{B(z)}}{z} < \infty.
  $$
   Moreover, by Lemma \ref{LemmaRightInversionFormula}, we have for all $w \in W$ that $(T-z)R(z)w=w $. 
  From Proposition \ref{PropIntegralOfKompactOperatorsIsCompact}, we have that 
  $$
  L = \frac{1}{2\I\pi}\integral{\Gamma}{B(z)}{z} 
  $$
  is a compact operator. From Proposition \ref{PropIntegralOfMultiplicationIsProjector}, we know that there exists an idempotent $P$ ($P = M_{\ind{f^{-1}(\Theta)}}$) such that for all $w \in \mathcal{L} $, 
  $$
  Pw = \frac{-1}{2\I\pi}\integral{\Gamma}{(M_f -z)^{-1}w}{z},
  $$
  and such that $\dim(P(X)) = \dim((I-P)(X)) = \infty $.
  
  Let $S \in \Com{T}$. Then for all $w \in W$ we have that
  $
  (T-z)SR(z)w = S(T-z)R(z)w = Sw = (T-z)R(z)Sw
  $
  (because $Sw \in W $ by Lemma \ref{LemSWsubsetW}). As $\Specp{T} = \emptyset$, $T-z$ is injective so $ SR(z)w = R(z)Sw $. Then for all $w \in \mathcal{L}$ (remember that $\mathcal{L} \subset W $) we have
  $$
  S(P+L)w =  \frac{-1}{2\I\pi}\integral{\Gamma}{SR(z)w \; }{z} 
          = \frac{-1}{2\I\pi}\integral{\Gamma}{R(z)Sw \;}{z} 
          = (P+L)Sw.
  $$
  As the closure of $\mathcal{L}$ is $\Lp{p}(\Omega,\mu)$, we get that $S \in \Com{P+L} $. So $\Com{T} \subset \Com{P+L}$. As $P+L $ has a non trivial hyperinvariant subspace by Proposition \ref{PropPplusKaSHI}, $T$ also has one.
\end{proof}

  Let $N \in \B{H} $ be a normal operator on a Hilbert space. Let $(\Omega,\mu)$ be a measure space, $f \in \Lp{\infty}(\Omega,\mu)$ and $W: \Lp{2}(\Omega,\mu) \rightarrow H $ be a unitary operator satisfying the consequences of Theorem \ref{ThSpectralTheorem}. Let $K \in \K{H} $ be a compact operator. Then $WKW^* $ is a compact operator on $\Lp{2}(\Omega,\mu)$, so by Theorem \ref{ThSingularValueDecomposition} there exist a sequence $(s_n)_{n \in \N}$ of positive real numbers such that $\lim_{n \rightarrow \infty} s_n = 0$ and two orthonormal families $(u_n)_{n \in \N}, (v_n)_{n \in \N} $ of vectors in $H$ such that
  $
    W K W^* = \sum_{n \in \N} s_n \tens{u_n}{v_n}
  $.
With these notations, one can state a direct corollary of Theorem \ref{ThMainTheorem} for compact perturbations of normal operators on Hilbert spaces.

\begin{corollary}
  \label{CoroNormalPlusKompact}
  Let $N \in \B{H}$ be a bounded normal operator and $K \in \K{H} $ be a compact operator. With the notations as above, suppose that there exists a rectifiable piecewise smooth Jordan curve $\Gamma$ such that
  \begin{itemize}
    \item[1.] There exist $a,b \in \Spece{N}$ such that $a$ is in the interior of $\Gamma$ and $b$ is in the exterior of $\Gamma$,
    \item[2.] $\mu(f^{-1}(\Gamma))=0$,
    \item[3.] For all $n \in \N$, $z \in \Gamma$, we have that $u_n \in \Range(M_f -z)$ and $v_n \in \Range(M_f-z)^* $,
    \item[4.] Denote by $A(z)$ the (possibly unbounded) operator $A(z) = \sum_{n \in \N} s_n \tens{ \left( (M_f-z)^{-1}u_n \right) }{\left( (M_{\conj{f}} -\conj{z})^{-1} v_n \right)}$. For all $z \in \Gamma$, we suppose that $A(z)$ is a compact operator, and $A : \Gamma \rightarrow \K{H} $ is a continuous application. 
  \end{itemize}   
  Then the operator $T = N + K$ has a non trivial hyperinvariant subspace.
\end{corollary}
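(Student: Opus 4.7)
The plan is to prove Corollary \ref{CoroNormalPlusKompact} by reducing it to Theorem \ref{ThMainTheorem} through the unitary conjugation furnished by the spectral theorem. First, I would apply Theorem \ref{ThSpectralTheorem} to $N$: there exist a $\sigma$-finite measure space $(\Omega,\mu)$, a function $f \in \Lp{\infty}(\Omega,\mu)$ and a unitary $W : H \to \Lp{2}(\Omega,\mu)$ such that $WN = M_f W$, equivalently $WNW^* = M_f$. Next, since $WKW^* \in \K{\Lp{2}(\Omega,\mu)}$, Theorem \ref{ThSingularValueDecomposition} provides orthonormal families $(u_n)_{n \in \N}$, $(v_n)_{n \in \N}$ in $\Lp{2}(\Omega,\mu)$ and positive reals $s_n \to 0$ such that $WKW^* = \sum_{n \in \N} s_n \tens{u_n}{v_n}$. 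Consequently,
$$
W(N+K)W^* = M_f + \sum_{n \in \N} s_n \tens{u_n}{v_n},
$$
which is precisely the class of operators to which Theorem \ref{ThMainTheorem}, in the case $p = q = 2$, applies.

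The four hypotheses of the corollary are phrased directly in terms of the data $f$, $u_n$, $v_n$ and $M_f$ produced by this conjugation, so they coincide verbatim with the four hypotheses of Theorem \ref{ThMainTheorem}. The only minor point to record is that unitary conjugation preserves the essential spectrum, so $\Spece{M_f} = \Spece{WNW^*} = \Spece{N}$, which is what allows hypothesis~1 to be transported between the two statements without change. Invoking Theorem \ref{ThMainTheorem} then yields a non trivial closed subspace $\widetilde{M} \subset \Lp{2}(\Omega,\mu)$ that is hyperinvariant for $W(N+K)W^*$.

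To finish, I would set $M = W^*(\widetilde{M})$. Since $W^*$ is a surjective isometry, $M$ is a non trivial closed subspace of $H$. For any $S \in \Com{N+K}$, the operator $WSW^*$ belongs to the commutant of $W(N+K)W^*$ and therefore preserves $\widetilde{M}$; applying $W^*$ gives $S(M) = W^*(WSW^*)(\widetilde{M}) \subset W^*(\widetilde{M}) = M$. Hence $M$ is a non trivial hyperinvariant subspace for $T = N + K$.

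I do not anticipate a genuine technical obstacle here: the corollary is by design a verbatim unitary translation of Theorem \ref{ThMainTheorem}. The one conceptual point worth being explicit about is that the hypotheses of the corollary should be read as conditions on the auxiliary data $(\Omega,\mu,f,W,u_n,v_n,s_n)$ coming from the spectral and singular value decompositions of $N$ and $K$, rather than as intrinsic properties of the pair $(N,K)$ alone.
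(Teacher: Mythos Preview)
Your proposal is correct and matches the paper's intent exactly: the paper presents this statement as a ``direct corollary'' of Theorem~\ref{ThMainTheorem} without writing a separate proof, the translation via the spectral theorem and singular value decomposition being set up in the paragraph immediately preceding the corollary. Your explicit check that unitary conjugation preserves the essential spectrum and carries hyperinvariant subspaces to hyperinvariant subspaces fills in precisely the (trivial) details the paper omits.
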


We next give some simple applications of this corollary

\begin{example}
  Let $ (\Omega,\mu) $ be a borelian $\sigma$-finite measure space. More precisely, we set $\Omega = \lbrace \xi \in \C, \abs{\xi} \le 1 \rbrace $ and we set $\mu = m $ be the Lebesgue measure on the complex plane. Denote by $A= \lbrace \xi \in \C, \frac{1}{3} \le \abs{\xi} \le \frac{2}{3} \rbrace $. Let $f \in \Lp{\infty}(\Omega, \mu)$ be the bounded function defined by $f(\xi)=\xi $. Let $g,h \in \Lp{2}(\Omega,\mu) $, and denote by $u = (1-\ind{A})g $ and $v = (1-\ind{A})h $. Let $\Gamma = \lbrace z \in \C, \abs{z}= \frac{1}{2} \rbrace $. Then $\Spece{M_f}= \Omega $, $\mu(f^{-1}(\Gamma))=0 $ and for all $z \in \Gamma$, $\frac{u}{f-z},\frac{v}{\conj{f-z}} \in \Lp{2}(\Omega,\mu)$. Moreover the application
  \begin{align*}
    A : &\Gamma \rightarrow \K{H} \\
        & z \mapsto \tens{\frac{u}{f-z}}{\frac{v}{\conj{f-z}}}
  \end{align*}
  is continuous. By Corollary \ref{CoroNormalPlusKompact}, $M_f + \tens{u}{v} $ has a non trivial hyperinvariant subspace.
\end{example}

\begin{example}
  Let $ (\Omega,\mu) $ be a borelian $\sigma$-finite measure space. More precisely, we set $\Omega = \lbrace \xi \in \C, \abs{\xi} \le 2 \rbrace $ and we set $\mu = m $ be the Lebesgue measure on the complex plane. Let $f \in \Lp{\infty}(\Omega, \mu)$ be the bounded function defined by $f(\xi)=\xi $. Let $g_n, h_n \in \Lp{2}(\Omega,\mu) $ such that $\norme{g_n} \le 1 $ and  $\norme{h_n} \le 1 $, and denote by $u_n(\xi) = (1-\abs{\xi})g_n(\xi) $ and $v_n(\xi) = (1-\abs{\xi})h_n(\xi) $. Let $(s_n)_{n \in \N} $ be a sequence of positive real numbers such that $\sum_{n \in \N} s_n < \infty $. Let $\Gamma = \lbrace z \in \C, \abs{z}= 1 \rbrace $. Then for all $z \in \Gamma $ we have
\begin{align*}
  \integral{\Omega}{\frac{\abs{u_n(\xi)}^2}{\abs{\xi -z}^2}}{\mu(\xi)}
    \le   \integral{\Omega}{\frac{\abs{1-\abs{\xi}}^2 \abs{g_n(\xi)}^2}{\abs{\abs{\xi} -\abs{z}}^2}}{\mu(\xi)} 
    &= \integral{\Omega}{\frac{\abs{1-\abs{\xi}}^2 \abs{g_n(\xi)}^2}{\abs{\abs{\xi} -1}^2}}{\mu(\xi)} \\ 
    &= \integral{\Omega}{ \abs{g_n(\xi)}^2}{\mu(\xi)} < \infty.
\end{align*}
  So $u_n \in \Range(M_f-z) $. In the same way, we can prove that $v_n \in \Range(M_f-z)^* $. For all $z \in \Gamma$, we have that
\begin{align*}
  \norme{A(z)} 
    &= \norme{ \sum_{n \in \N} s_n \tens{ \left( (M_f-z)^{-1}u_n \right) }{\left( (M_{\conj{f}} -\conj{z})^{-1} v_n \right)}} \\
    &\le \sum_{n \in \N} s_n \norme{g_n} \norme{h_n} \\
    &\le \sum_{n \in \N} s_n  < \infty.
\end{align*}
  So $A(z)$ is a bounded operator. Denote by $A_N(z) =  \sum_{n =1}^N s_n \tens{ \left( (M_f-z)^{-1}u_n \right) }{\left( (M_{\conj{f}} -\conj{z})^{-1} v_n \right)} $. Then we have that 
$$
  \norme{A(z) - A_N(z)}
    =   \norme{\sum_{n =N+1}^\infty s_n \tens{ \left( (M_f-z)^{-1}u_n \right) }{\left( (M_{\conj{f}} -\conj{z})^{-1} v_n \right)}}
    \le \sum_{n=N+1}^\infty  s_n.
$$
  The last term is the tail of a convergent series, so it goes to $0$ as $N$ goes to infinity. So $A(z) $ is a limit of finite rank operators, hence it is a compact operator.
  
  Let $z_1, z_2 \in \Gamma$. Then
$$
  \norme{A(z_1) - A(z_2)}
    \le \norme{A(z_1) - A_N(z_1)} + \norme{A_N(z_1) - A_N(z_2)} + \norme{A_N(z_2) - A(z_2)}.
$$  
  The quantities on the right hand side are small if $ N$ is big enough and $z_1$ is close enough of $z_2$. So $A : \Gamma \rightarrow \K{H} $ is a continuous application. Hence $M_f + \sum_{n \in \N} s_n \tens{u_n}{v_n} $ has a non trivial hyperinvariant subspace.
\end{example}

  Now we give a version of Corollary \ref{CoroNormalPlusKompact} for compact perturbations of diagonal operators.

\begin{corollary}
  \label{CoroDiagonalPlusKompact}
  Let $(e_n)_{n \in \N} $ be an orthonormal basis of the Hilbert space $H$. Let $D = \sum_{n \in \N} \lambda_n \tens{e_n}{e_n} $ be a bounded diagonal operator on $H$. Let $(s_n)_{n \in \N}$ be a sequence of positive real numbers such that $\lim_{n \rightarrow \infty} s_n = 0$. Let $(u_n)_{n \in \N}, (v_n)_{n \in \N} $ be two orthonormal families of vectors in $H$. We denote $ K = \sum_{n \in \N} s_n \tens{u_n}{v_n} $.
   Suppose that there exists a rectifiable piecewise smooth Jordan curve $\Gamma$ such that
  \begin{itemize}
    \item[1.] There exist two accumulation points $a,b$ of eigenvalues of $D$ such that $a$ is in the interior of $\Gamma$ and $b$ is in the exterior of $\Gamma$,
    \item[2.] $\Gamma \cap \Specp{D} = \emptyset $,
    \item[3.] For all $n \in \N$, $z \in \Gamma$, we have that $u_n \in \Range(D -z)$ and $v_n \in \Range(D-z)^* $, 
    \item[4.] Denote by $A(z)$ the (possibly unbounded) operator $A(z) = \sum_{n \in \N} s_n \tens{ \left( (D-z)^{-1}u_n \right) }{\left( (D^* -\conj{z})^{-1} v_n \right)}$. For all $z \in \Gamma$, we suppose that $A(z)$ is a compact operator, and $A : \Gamma \rightarrow \K{H} $ is a continuous application. 
  \end{itemize}   
  Then the operator $T = D + K$ has a non trivial hyperinvariant subspace.
\end{corollary}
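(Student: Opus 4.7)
The plan is to deduce this statement directly from Corollary \ref{CoroNormalPlusKompact} by representing the diagonal operator $D$ as a multiplication operator on a convenient $\Lp{2}$ space and checking that the four hypotheses translate across the identification. Concretely, I take $\Omega = \N$ equipped with the counting measure $\mu$ and set $f : \N \to \C$, $f(n) = \lambda_n$. The map $W : H \to \Lp{2}(\N,\mu)$ defined by $W(h)(n) = \ps{h}{e_n}$ is a unitary satisfying $W D W^* = M_f$, which plays the role of the unitary given by Theorem \ref{ThSpectralTheorem}. Since $W$ sends compact operators to compact operators and preserves orthonormality, the operator $WKW^*$ has the form $\sum_{n \in \N} s_n \tens{Wu_n}{Wv_n}$ with $(Wu_n)_{n \in \N}$ and $(Wv_n)_{n \in \N}$ orthonormal in $\Lp{2}(\N,\mu)$, which is precisely the singular value decomposition used in Corollary \ref{CoroNormalPlusKompact}.

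I then verify the four hypotheses. For assumption 1, I use the standard fact that for a diagonal operator on a separable Hilbert space the essential spectrum contains all accumulation points of the sequence of eigenvalues, so any two accumulation points $a, b$ of $(\lambda_n)$ automatically belong to $\Spece{D} = \Spece{M_f}$. For assumption 2, since $\mu$ is counting measure on $\N$, the condition $\mu(f^{-1}(\Gamma)) = 0$ simply says that no $\lambda_n$ lies on $\Gamma$, which is exactly $\Gamma \cap \Specp{D} = \emptyset$. Assumptions 3 and 4 are stated in terms of $D$ and $D^*$, and they carry over verbatim to $M_f$ and $M_{\conj{f}}$ via the unitary $W$, since $W D W^* = M_f$ and consequently $W D^* W^* = M_{\conj{f}}$, so that $W A(z) W^*$ is the operator $A(z)$ appearing in Corollary \ref{CoroNormalPlusKompact}; compactness and continuity on $\Gamma$ are preserved by conjugation with a unitary.

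Having verified all four assumptions of Corollary \ref{CoroNormalPlusKompact} for the operator $W(D+K)W^* = M_f + WKW^*$, the corollary yields a non-trivial hyperinvariant subspace $M \subset \Lp{2}(\N,\mu)$ for $M_f + WKW^*$. Pulling back through the unitary, $W^*(M)$ is a non-trivial hyperinvariant subspace for $T = D+K$, which concludes the proof. The only step that is not purely formal is the identification of accumulation points of eigenvalues with elements of $\Spece{D}$, but this is a classical fact and is the mildest possible obstacle.
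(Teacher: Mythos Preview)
Your argument is correct and follows essentially the same route as the paper: represent $D$ as a multiplication operator on $\Lp{2}(\N,\mu)$, observe that accumulation points of $(\lambda_n)$ lie in $\Spece{D}$ and that $\Gamma\cap\Specp{D}=\emptyset$ forces $\mu(f^{-1}(\Gamma))=0$, then apply Corollary~\ref{CoroNormalPlusKompact}. The only cosmetic difference is that the paper uses the weighted measure $\mu=\sum_{n}2^{-n}\delta_{\{n\}}$ instead of counting measure, which changes nothing of substance.
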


\begin{proof}
  Let $\Omega = \N $. Let $\mu = \sum_{n \in \N} \frac{1}{2^n} \delta_{\lbrace n \rbrace} $, with $\delta_{\lbrace n \rbrace} $ being the Dirac measure at the point $ \lbrace n \rbrace $. Let $f: \N \rightarrow \C $ be defined by $f(n)=\lambda_n $. Then $D$ is unitarily equivalent to $M_f$, the multiplication by $f$ on $\Lp{2}(\Omega,\mu)$. As $a$ and $b$ are accumulation points of eigenvalues of $D$, we have that $ a,b \in \Spece{D} = \Spece{M_f} $. As $\Gamma \cap \Specp{D} = \emptyset $, we have that $f^{-1}(\Gamma) = \emptyset $ so $\mu(f^{-1}(\Gamma)) = 0$. By Corollary \ref{CoroNormalPlusKompact}, $D + K$ has a non trivial hyperinvariant subspace.
\end{proof}

\section{Consequences for compact perturbations of diagonal operators on a Hilbert space: proof of Theorem \ref{ThDiagPlusKompactSummabilityCondition}}

  The goal of this section is to prove Theorem \ref{ThDiagPlusKompactSummabilityCondition}.
  We will need some material before proving Theorem \ref{ThDiagPlusKompactSummabilityCondition}. First we will need a modified version of Lemma 2.1 of \cite{Fang_Xia_2012}.
 

\begin{lemma}
  \label{Lem21}
  Let $(\lambda_k)_{k \in \N}$ be a bounded sequence of complex numbers, and let $(\alpha_{n,k})_{n,k \in \N} $ be a sequence of complex numbers such that
  $$
    \sum_{n \in \N} \sum_{k \in \N} \abs{\alpha_{n,k}} < \infty.
  $$
  Then for almost every $x \in \R$ we have that
  $$
    \sum_{n \in \N} \sum_{k \in \N} \frac{\abs{\alpha_{n,k}}^2}{\abs{\Reel{\lambda_k}-x}^2} < \infty.
  $$
\end{lemma}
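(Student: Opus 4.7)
Write $r_k = \Reel{\lambda_k}$, $\beta_{n,k} = \abs{\alpha_{n,k}}$ and set
$$F(x) = \sum_{n \in \N} \sum_{k \in \N} \frac{\beta_{n,k}^2}{(r_k - x)^2}.$$
The goal is $F(x) < \infty$ for $m$-almost every $x \in \R$. A direct Fubini argument fails because each summand has a non-integrable singularity at $r_k$, so my plan is to truncate, bound the truncation in $L^1(\R)$, and apply a Chebyshev-type estimate to super-level sets.

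The essential computation is elementary: for any real $r$, $\beta \ge 0$ and $N > 0$,
$$\int_\R \min\!\left(\frac{\beta^2}{(r-x)^2},\, N\right) dx = 4\beta\sqrt{N}.$$
Indeed, after the substitution $y = r - x$, the truncation equals $N$ on the interval $\abs{y} \le \beta/\sqrt{N}$ and equals $\beta^2/y^2$ outside; both regions contribute $2\beta\sqrt{N}$. Setting $F_N(x) := \sum_{n,k}\min(\beta_{n,k}^2/(r_k-x)^2,\, N)$, Tonelli together with the hypothesis then gives
$$\int_\R F_N(x)\,dx \le 4\sqrt{N}\sum_{n,k}\beta_{n,k} =: 4C\sqrt{N}.$$

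Since $t \mapsto \min(t, N)$ is concave with $\min(0, N) = 0$ it is subadditive, so $\min(F(x), N) \le F_N(x)$ pointwise. Because $\{x : F(x) > N/2\} \subset \{x : \min(F(x), N) > N/2\}$, Chebyshev's inequality yields
$$m\{x \in \R : F(x) > N/2\} \le \frac{2}{N}\int_\R F_N(x)\,dx \le \frac{8C}{\sqrt{N}}.$$
Letting $N \to \infty$ and noting that $\{F = \infty\} \subset \{F > N/2\}$ for every $N$ forces $m\{x : F(x) = \infty\} = 0$, which is the desired conclusion.

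The only real obstacle is the lack of local integrability of $F$, which is precisely what is sidestepped by the truncation. The decisive quantitative point is the $\sqrt{N}$-rate of growth of $\int F_N$: it is slow enough that dividing by $N/2$ in Chebyshev produces a bound that vanishes as $N \to \infty$. Note that the boundedness of $(\lambda_k)$ is not actually needed for the argument.
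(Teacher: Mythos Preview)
Your proof is correct. Both your argument and the paper's rest on the same observation: each term $\beta_{n,k}^2/(r_k-x)^2$ is not integrable over $\R$, but a suitable truncation is, with integral proportional to $\beta_{n,k}$. The paper truncates \emph{spatially}: it removes the interval $I_{n,k} = [r_k - \delta\beta_{n,k},\, r_k + \delta\beta_{n,k}]$ and computes $\int_{\R\setminus I_{n,k}} \beta_{n,k}^2/(r_k-x)^2\,dx = 2\beta_{n,k}/\delta$; the set $\{F=\infty\}$ is then covered by $\bigcup_{n,k} I_{n,k}$ (of total measure at most $2\delta C$) together with a null set, and one sends $\delta\to 0$. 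You instead truncate \emph{in value} at height $N$ and reach the conclusion via subadditivity of $\min(\cdot,N)$ and Chebyshev's inequality. The two are linked by the correspondence $N \sim 1/\delta^2$: your $4\beta\sqrt{N}$ matches the paper's $2\beta/\delta$ up to a factor of $2$, accounted for by your inclusion of the inner piece. The paper's route is marginally more direct (no subadditivity, no Chebyshev), while yours packages the estimate in a form that adapts cleanly to other singular kernels.
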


\begin{proof}
  Suppose that $\sum_{n \in \N} \sum_{k \in \N} \abs{\alpha_{n,k}} < \infty $. Then for every $\varepsilon > 0$ there exists $\delta >0 $ such that $ 2\delta \sum_{n \in \N} \sum_{k \in \N} \abs{\alpha_{n,k}} < \epsilon $. We denote by $ I_{n,k}$ the interval $ [\Reel{\lambda_k} - \delta \alpha_{n,k}, \Reel{\lambda_k} + \delta \alpha_{n,k} ] $, and we define the functions $f_{n,k} $ on $\R$ by 
  $$
  f_{n,k}(x) = \frac{\abs{\alpha_{n,k}}^2}{\abs{\Reel{\lambda_k}-x}^2} \ind{\R \setminus I_{n,k}}(x).
  $$ 
  We have that
$$
  \integral{\R}{f_{n,k}(x)}{x}
    = \integral{\R \setminus I_{n,k}}{\frac{\abs{\alpha_{n,k}}^2}{\abs{\Reel{\lambda_k}-x}^2}}{x} 
    = \abs{\alpha_{n,k}}^2 \frac{2}{\delta \abs{\alpha_{n,k}}} 
    = \frac{2 \abs{\alpha_{n,k}}}{\delta }.
$$

Let us denote by $F$ the function $F(x) = \sum_{n \in \N} \sum_{k \in \N} f_{n,k}(x) $. As the functions $f_{n,k}$ are non negative functions, using Beppo-Levi Theorem we have that
$$
  \integral{\R}{F(x)}{x} 
    = \sum_{k \in \N} \sum_{n \in \N} \integral{\R}{f_{n,k}(x)}{x}
    = \frac{2}{\delta } \sum_{k \in \N} \sum_{n \in \N} \abs{\alpha_{n,k}} < \infty.
$$ 
  So $F$ belongs to $\Lun$, and for almost every $x \in \R$, we have $F(x) < \infty $.
  Denote by $\Lambda$ the set
$$
  \Lambda = \left\lbrace x \in \R, \, \sum_{k \in \N} \sum_{n \in \N} \frac{\abs{\alpha_{n,k}}^2}{\abs{\Reel{\lambda_k}-x}^2} = \infty \right\rbrace.
$$
Obviously we have that
$$
  \Lambda \subset \left( \bigcup_{k,n \in \N} I_{n,k} \right) \cup \lbrace x \in \R, F(x) = \infty \rbrace.
$$
Using the additivity of the Lebesgue measure we get that
\begin{align*}
  \m(\Lambda) 
    &\le \sum_{k \in \N} \sum_{n \in \N} \m(I_{n,k}) + \m(\lbrace x \in \R, F(x) = \infty \rbrace) \\
    &=   2\delta \sum_{k \in \N} \sum_{n \in \N} \abs{\alpha_{n,k}} + 0 \\
    &\le \varepsilon .
\end{align*}
  As $\varepsilon$ was chosen arbitrarily, we eventually get that $\m(\Lambda) =0$.
\end{proof}

\begin{lemma}
  \label{LemSumFiniPP}
  Suppose that conditions (\ref{Cond1}) and (\ref{Cond2}) of Theorem \ref{ThDiagPlusKompactSummabilityCondition} are satisfied, then for almost every $x \in \R$, we have that
$$
  \sum_{k\in \N} \sum_{n\in \N} \frac{\abs{ a_n \ps{u_n}{e_k}}^2}{\abs{\Reel{\lambda_k}-x}^2} < \infty , \quad
  \sum_{n\in \N} \sum_{j\in \N} \frac{\abs{ b_n \ps{e_j}{v_n}}^2}{\abs{\Reel{\lambda_k}-x}^2} < \infty 
$$
\end{lemma}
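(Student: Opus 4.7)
The plan is to reduce the statement directly to Lemma \ref{Lem21}, which is already proved in the excerpt, by two separate applications (one for each of the two stated sums) and then combine the resulting null sets.

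First I would apply Lemma \ref{Lem21} with the choice $\alpha_{n,k} = a_n \ps{u_n}{e_k}$. The bounded sequence $(\lambda_k)_{k \in \N}$ is the diagonal sequence of $D$, which is bounded since $D \in \B{H}$. The summability hypothesis of Lemma \ref{Lem21}, namely $\sum_{n \in \N} \sum_{k \in \N} |\alpha_{n,k}| < \infty$, is exactly condition (\ref{Cond1}). Hence there is a null set $\Lambda_1 \subset \R$ such that for every $x \in \R \setminus \Lambda_1$,
$$
  \sum_{k \in \N} \sum_{n \in \N} \frac{\abs{a_n \ps{u_n}{e_k}}^2}{\abs{\Reel{\lambda_k} - x}^2} < \infty.
$$

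Next I would repeat the argument with $\alpha_{n,j} = b_n \ps{e_j}{v_n}$, which satisfies the summability hypothesis by condition (\ref{Cond2}). This produces a null set $\Lambda_2 \subset \R$ outside of which the second sum is finite. Taking $\Lambda = \Lambda_1 \cup \Lambda_2$, which still has Lebesgue measure zero, gives both conclusions simultaneously for every $x \in \R \setminus \Lambda$.

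The only mild wrinkle is matching the indexing of the second sum to the statement of Lemma \ref{Lem21}: in the lemma the roles of the two indices are asymmetric (the $\Reel{\lambda_k}$ depends only on one of them), so for the second application I would simply relabel the dummy index playing the role of $k$ in the lemma to match the index $j$ appearing in the denominator. There is no real obstacle here; the content of the lemma is entirely contained in Lemma \ref{Lem21}, and the proof amounts to two invocations of it together with the observation that a finite union of Lebesgue null sets is null.
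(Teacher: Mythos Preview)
Your proposal is correct and follows exactly the approach of the paper, which simply states that the lemma is a direct consequence of Lemma~\ref{Lem21}; you have merely spelled out the two applications (with $\alpha_{n,k}=a_n\ps{u_n}{e_k}$ and $\alpha_{n,j}=b_n\ps{e_j}{v_n}$) and the union of the two null sets. Your remark about relabeling the index in the second application is also appropriate, since the denominator should carry $\lambda_j$ rather than $\lambda_k$.
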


\begin{proof}
  This is a direct consequence of Lemma \ref{Lem21}.
\end{proof}
  
  In order to use Theorem \ref{ThMainTheorem}, we need to define a Jordan curve $\Gamma$ that will split the eigenvalues of $D$ in two parts. Then we will need to check whether $A(z)$ has the properties required on $\Gamma$. First we write $A_1(z) = \sum_{n \in \N} a_n \tens{ \left( (D-z)^{-1}u_n \right) }{e_n}$ and $A_2(z) = \sum_{n \in \N} b_n \tens{e_n}{\left( (D^* -\conj{z})^{-1} v_n \right)}$. Note that if $A_1$ and $A_2$ has the properties required by Theorem \ref{ThMainTheorem}, then
  \begin{align*}
  A_1(z)A_2(z) 
    &=  \left( \sum_{n \in \N} a_n \tens{ \left( (D-z)^{-1}u_n \right) }{e_n} \right)\left( \sum_{n \in \N} b_n \tens{e_n}{\left( (D^* -\conj{z})^{-1} v_n \right)} \right) \\
    &=  \sum_{n \in \N} s_n \tens{ \left( (D-z)^{-1}u_n \right) }{\left( (D^* -\conj{z})^{-1} v_n \right)} \\
    &= A(z), 
  \end{align*} 
  and $A(z)$ has the required properties.
  Now we will need some estimates on $\norme{A_1(z)}$ and $\norme{A_2(z)}$. After that we will be able to draw the Jordan curve $\Gamma$ that we need.  
  
\begin{lemma}
  \label{LemMajDzA1}
  Let $z \in \C \setminus \lbrace \lambda_k, k \in \N \rbrace $. We denote $x= \Reel{z}$. Suppose that condition (\ref{Cond1}) of Theorem \ref{ThDiagPlusKompactSummabilityCondition} is satisfied. Then for almost every $x \in \R \setminus \lbrace \Reel{\lambda_k}, k \in \N \rbrace $, $A_1(z)$ is a bounded operator and we have
  $$
    \norme{A_1(z)}^2 \le \sum_{k\in \N} \sum_{n\in \N} \frac{\abs{ a_n \ps{u_n}{e_k}}^2}{\abs{\Reel{\lambda_k}-x}^2}.
  $$
\end{lemma}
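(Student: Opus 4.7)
The approach is straightforward: expand $A_1(z)$ in the orthonormal basis $(e_k)$ and estimate its operator norm via a coefficient-wise Cauchy--Schwarz argument. Since $D$ is diagonal, $(D-z)^{-1}e_k = (\lambda_k - z)^{-1} e_k$, and in the Hilbert setting the rank-one operator satisfies $\tens{u}{v}(h) = \ps{h}{v} u$. Combining these two facts and interchanging summations, I would first write out the explicit formula
$$
A_1(z)h \;=\; \sum_{k \in \N} \left( \sum_{n \in \N} \frac{a_n \ps{u_n}{e_k} \ps{h}{e_n}}{\lambda_k - z} \right) e_k.
$$

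The key step is then to apply Cauchy--Schwarz in the variable $n$ for each fixed $k$, pairing $\ps{h}{e_n}$ against $a_n \ps{u_n}{e_k}/(\lambda_k - z)$. Parseval's identity $\sum_n \abs{\ps{h}{e_n}}^2 = \norme{h}^2$ and a sum over $k$ immediately yield
$$
\norme{A_1(z)h}^2 \;\le\; \norme{h}^2 \sum_{k \in \N}\sum_{n \in \N} \frac{\abs{a_n \ps{u_n}{e_k}}^2}{\abs{\lambda_k - z}^2}.
$$
Setting $x = \Reel{z}$ and using the trivial inequality $\abs{\lambda_k - z} \ge \abs{\Reel{\lambda_k} - x}$ delivers exactly the announced estimate. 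Finiteness of the right-hand side for almost every $x \in \R$ is precisely the content of Lemma \ref{LemSumFiniPP}, applied with $\alpha_{n,k} = a_n \ps{u_n}{e_k}$, so that hypothesis (\ref{Cond1}) is invoked.

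The one point that needs care, and which I expect to be the main (though purely technical) obstacle, is giving rigorous meaning to the series defining $A_1(z)$: a priori it is only a formal rank-one series. I would handle this by rerunning the same Cauchy--Schwarz estimate on tails $\sum_{n=M+1}^{N}$ in place of the full series. This shows that, for almost every $x$, the partial sums $A_1^{(N)}(z) = \sum_{n=1}^N a_n \tens{(D-z)^{-1}u_n}{e_n}$ form a Cauchy sequence in $\B{H}$, hence converge in operator norm to a bounded operator obeying the displayed inequality. Once well-definedness is in place, the bound and the a.e.\ statement follow at once from the steps above.
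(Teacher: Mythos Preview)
Your proposal is correct and follows essentially the same approach as the paper: expand $A_1(z)h$ in the basis $(e_k)$, apply Cauchy--Schwarz in the $n$-summation for each fixed $k$, use Parseval, and then replace $\abs{\lambda_k-z}$ by $\abs{\Reel{\lambda_k}-x}$ in the denominator. Your additional paragraph on the convergence of the partial sums $A_1^{(N)}(z)$ is a welcome bit of extra care that the paper leaves implicit.
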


\begin{proof}
  Let $z \in \C \setminus \lbrace \lambda_k, k \in \N \rbrace $. Note that $\abs{\Reel{\lambda_k-z}} \le \abs{\lambda_k-z} $. So we have that
$$
  \sum_{k\in \N} \sum_{n\in \N} \frac{\abs{ a_n \ps{u_n}{e_k}}^2}{\abs{\lambda_k-z}^2}
    \le \sum_{k\in \N} \sum_{n\in \N} \frac{\abs{ a_n \ps{u_n}{e_k}}^2}{\abs{\Reel{\lambda_k}-x}^2}.
$$
  Let $h \in H$. Using Cauchy-Schwartz inequality we get that
\begin{align*}
  \norme{(A_1(z)) (h)}^2
    &=   \sum_{k\in \N} \abs{\sum_{n\in \N}\frac{ a_n \ps{h}{e_n} \ps{u_n}{e_k}}{\lambda_k-z}}^2 \\
    &\le \sum_{k \in \N} \norme{\sum_{n\in \N}\frac{ a_n \ps{u_n}{e_k}}{\lambda_k-z} e_n}^2 \norme{\sum_{n \in \N} \ps{e_n}{h}e_n}^2 \\
    &= \sum_{k \in \N} \sum_{n\in \N} \abs{\frac{ a_n \ps{u_n}{e_k}}{\lambda_k-z}}^2 \norme{h}^2.
\end{align*}
  Hence the inequality of Lemma \ref{LemMajDzA1} holds. We used the condition (\ref{Cond1}) in Cauchy Schwartz inequality to ensure that $ \left(\frac{a_n \ps{u_n}{e_k}}{\lambda_k-z}\right)_{n\in \N} $ is a square summable sequence.
\end{proof}

  Similarly, one can prove the following lemma.

\begin{lemma}
  \label{LemMajA2Dz}
  Let $z \in \C \setminus \lbrace \lambda_k, k \in \N \rbrace $. We denote $x= \Reel{z}$. Suppose that condition (\ref{Cond2}) of Theorem \ref{ThDiagPlusKompactSummabilityCondition} is satisfied. Then for almost every $x \in \R \setminus \lbrace \Reel{\lambda_k}, k \in \N \rbrace $, $A_2(z)$ is bounded and we have
  $$
    \norme{A_2(z)}^2 \le \sum_{n\in \N} \sum_{j\in \N} \frac{\abs{ b_n \ps{e_j}{v_n}}^2}{\abs{\Reel{\lambda_k}-x}^2}.
  $$
\end{lemma}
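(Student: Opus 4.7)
The plan is to mirror the argument used in Lemma \ref{LemMajDzA1}, with the roles of the two ``sides'' of the operator swapped. First I would diagonalize $(D^*-\conj z)^{-1}$ in the basis $(e_j)_{j \in \N}$: since $D^* e_j = \conj{\lambda_j} e_j$, we have
$$
(D^* - \conj z)^{-1} v_n = \sum_{j \in \N} \frac{\ps{v_n}{e_j}}{\conj{\lambda_j - z}} e_j,
$$
which makes sense for $z \in \C \setminus \lbrace \lambda_k, k \in \N \rbrace$ once we know (via Lemma \ref{LemSumFiniPP}) that the square-summability holds for almost every real $x = \Reel{z}$.

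Next I would apply $A_2(z)$ to an arbitrary $h \in H$. Using the formula $\tens{u}{v}(x) = \ps{x}{v} u$ in Hilbert space and the expansion above, a direct computation gives
$$
A_2(z)h = \sum_{n \in \N} b_n \left( \sum_{j \in \N} \frac{\ps{e_j}{v_n}}{\lambda_j - z} \ps{h}{e_j} \right) e_n.
$$
Since $(e_n)_{n\in\N}$ is orthonormal, Parseval immediately yields
$$
\norme{A_2(z)h}^2 = \sum_{n \in \N} \abs{b_n}^2 \left| \sum_{j \in \N} \frac{\ps{e_j}{v_n}}{\lambda_j-z} \ps{h}{e_j} \right|^2.
$$

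Then, exactly as in Lemma \ref{LemMajDzA1}, I would apply Cauchy--Schwarz to the inner sum over $j$, using that $(\ps{h}{e_j})_{j \in \N}$ is square-summable with norm $\norme{h}$ and that $(b_n \ps{e_j}{v_n}/(\lambda_j-z))_{j \in \N}$ is square-summable for almost every $x = \Reel{z}$ (by Lemma \ref{LemSumFiniPP} together with the trivial bound $\abs{\lambda_j-z} \ge \abs{\Reel{\lambda_j}-x}$). This gives
$$
\norme{A_2(z)h}^2 \le \norme{h}^2 \sum_{n \in \N} \sum_{j \in \N} \frac{\abs{b_n \ps{e_j}{v_n}}^2}{\abs{\lambda_j - z}^2} \le \norme{h}^2 \sum_{n \in \N} \sum_{j \in \N} \frac{\abs{b_n \ps{e_j}{v_n}}^2}{\abs{\Reel{\lambda_j}-x}^2},
$$
which is the desired bound on $\norme{A_2(z)}^2$. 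The only genuine bookkeeping point (and the main ``obstacle'') is getting the complex conjugation right when passing from $(D^* - \conj z)^{-1} v_n$ to the scalar $\ps{h}{(D^*-\conj z)^{-1}v_n}$, so that the coefficient appearing in the sum is $\ps{e_j}{v_n}/(\lambda_j-z)$ rather than its conjugate; everything else is a transcription of the earlier argument.
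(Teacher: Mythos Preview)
Your proof is correct and is exactly the mirror of the argument for Lemma \ref{LemMajDzA1} that the paper has in mind when it says ``Similarly, one can prove the following lemma.'' The conjugation bookkeeping is handled correctly, and the index in the final bound should indeed be $j$ (matching the summation variable), as you wrote, rather than the $k$ appearing in the paper's displayed statement.
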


\begin{lemma}
  \label{LemCompactContinu}
  Suppose that conditions (\ref{Cond1}) and (\ref{Cond2}) of Theorem \ref{ThDiagPlusKompactSummabilityCondition} are satisfied, then for almost every $ x_0 \in \R \setminus \lbrace \Reel{\lambda_k}, k \in \N \rbrace$, for every $ z \in s_0 =\lbrace z = x_0 + \I y, y \in \R \rbrace $, we have that $A_1(z)$ and $A_2(z)$ are compact operators. Moreover the maps $A_1 : s_0 \rightarrow \K{H} $ and $A_2 : s_0 \rightarrow \K{H} $ are continuous.
\end{lemma}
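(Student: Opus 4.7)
The plan is to exhibit $A_1(z)$ (and likewise $A_2(z)$) as a uniform-in-$z$ limit on $s_0$ of finite rank operators that are continuous in $z$. First I would choose $x_0 \in \R \setminus \lbrace \Reel{\lambda_k}, k \in \N \rbrace$ so that the conclusion of Lemma \ref{LemSumFiniPP} holds at $x_0$: this is possible for almost every $x_0$, since $\lbrace \Reel{\lambda_k}, k \in \N \rbrace$ is countable and the exceptional set of Lemma \ref{LemSumFiniPP} has Lebesgue measure zero. With this $x_0$ fixed, every $z \in s_0$ satisfies $\abs{\lambda_k - z} \ge \abs{\Reel{\lambda_k} - x_0} > 0$, so Lemmas \ref{LemMajDzA1} and \ref{LemMajA2Dz} immediately show that $A_1(z)$ and $A_2(z)$ are bounded operators for every $z \in s_0$.

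Next I would set $A_1^{(N)}(z) = \sum_{n=1}^{N} a_n \tens{\left((D-z)^{-1}u_n\right)}{e_n}$, which is a finite rank operator. Repeating the Cauchy--Schwartz argument from Lemma \ref{LemMajDzA1} applied to the tail $n > N$ yields
$$
\norme{A_1(z) - A_1^{(N)}(z)}^2 \le \sum_{k \in \N} \sum_{n > N} \frac{\abs{a_n \ps{u_n}{e_k}}^2}{\abs{\Reel{\lambda_k} - x_0}^2}.
$$
The right-hand side is the tail of a convergent series (by the choice of $x_0$) and crucially it does not depend on $z$ along $s_0$. Hence $A_1^{(N)}(z) \to A_1(z)$ in operator norm uniformly on $s_0$, and therefore $A_1(z) \in \K{H}$ for every $z \in s_0$. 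The same argument with Lemma \ref{LemMajA2Dz} handles $A_2(z)$.

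Finally, for continuity I would show that each summand $z \mapsto a_n \tens{\left((D-z)^{-1}u_n\right)}{e_n}$ is continuous from $s_0$ into $\K{H}$. The only non trivial step is to check that $z \mapsto (D-z)^{-1} u_n$ is continuous into $H$. Expanding in the basis $(e_k)_{k \in \N}$ and using the pointwise bound $\abs{\lambda_k - z}^{-2} \le \abs{\Reel{\lambda_k} - x_0}^{-2}$ together with the summability of $\sum_{k \in \N} \abs{\ps{u_n}{e_k}}^2 / \abs{\Reel{\lambda_k} - x_0}^2$ provided by Lemma \ref{LemSumFiniPP}, the dominated convergence theorem (for the counting measure on $\N$) gives the required continuity. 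Consequently each $A_1^{(N)}$ is continuous on $s_0$, and the uniform convergence established above transfers continuity to $A_1$; the argument for $A_2$ is entirely symmetric. The main obstacle is precisely this dominated convergence step: because the $\lambda_k$ may accumulate arbitrarily close to $s_0$ in the vertical direction, one cannot use $\abs{\lambda_k - z}$ itself as a uniform lower bound on denominators, and it is the choice of a "good" $x_0$ avoiding $\lbrace \Reel{\lambda_k}, k \in \N \rbrace$ that makes $\abs{\Reel{\lambda_k} - x_0}$ a workable uniform substitute.
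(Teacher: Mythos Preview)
Your proof is correct. It differs from the paper's in the choice of finite-rank approximation: the paper truncates $A_1(z)$ in the \emph{range} index by composing with the orthogonal projection $E_N$ onto $\operatorname{span}\{e_0,\dots,e_N\}$, obtaining
\[
E_N A_1(z)=\sum_{k\le N}\sum_{n\in\N}\frac{a_n\ps{u_n}{e_k}}{\lambda_k-z}\,\tens{e_k}{e_n},
\]
whereas you truncate in the \emph{summation} index $n$. Both tails are dominated by a tail of the same convergent double series from Lemma~\ref{LemSumFiniPP} (summed over $k>N$ in the paper, over $n>N$ in your argument), so both give uniform-in-$z$ convergence on $s_0$. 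For continuity of the finite-rank piece, the paper factorises
\[
E_N A_1(z_1)-E_N A_1(z_2)=\Big(\sum_{k\le N}\big(\tfrac{1}{\lambda_k-z_1}-\tfrac{1}{\lambda_k-z_2}\big)\tens{e_k}{e_k}\Big)\Big(\sum_{n\in\N}a_n\,\tens{u_n}{e_n}\Big),
\]
reducing the question to continuity of finitely many scalar functions $z\mapsto(\lambda_k-z)^{-1}$, while you instead show continuity of each summand $z\mapsto(D-z)^{-1}u_n$ via dominated convergence. The paper's route is slightly more elementary (no dominated convergence needed), while yours is arguably more natural given how $A_1(z)$ is presented and makes the uniformity in $z$ fully explicit. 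One small point worth stating: your appeal to Lemma~\ref{LemSumFiniPP} for the summability of $\sum_k\abs{\ps{u_n}{e_k}}^2/\abs{\Reel{\lambda_k}-x_0}^2$ for a \emph{fixed} $n$ requires $a_n\neq 0$ (divide the $n$-th row of the double sum by $\abs{a_n}^2$); when $a_n=0$ the corresponding summand vanishes anyway.
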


\begin{proof}
  First note that conditions (\ref{Cond1}) and (\ref{Cond2}) and Lemmas \ref{LemMajDzA1} and \ref{LemMajA2Dz} give us that the operators $A_1(z)$ and $A_2(z)$ are bounded for almost every $x_0$. Let $E_N$ be the orthogonal projection of H onto the subspace generated by $ e_0, e_1, \dots, e_N$. Then we have that
$$
  E_N A_1(z) = \sum_{k \le N} \sum_{n \in \N} \frac{a_n \ps{u_n}{e_k}}{\lambda_k -z}\tens{e_k}{e_n}.
$$
Note that $ E_N A_1(z) $ has finite rank. So we get that
$$
A_1(z) - E_N A_1(z) = \sum_{k > N} \sum_{n \in \N} \frac{a_n \ps{u_n}{e_k}}{\lambda_k -z}\tens{e_k}{e_n}.
$$
Using Lemma \ref{LemMajDzA1}, we get that
$$
  \norme{A_1(z) - E_N A_1(z)} \le \sum_{k > N} \sum_{n\in \N} \frac{\abs{ a_n \ps{u_n}{e_k}}^2}{\abs{\Reel{\lambda_k}-x}^2}.
$$
According to Lemma \ref{LemSumFiniPP}, the right term is the tail of a convergent series for almost every $x_0 \in \R $, so it goes to zero as $N$ goes to infinity. Therefore $A_1(z)$ is a uniform limit of finite rank operators, so it is a compact operator.

Now take $z_1, z_2 \in s_0 $. Thanks to the triangular inequality we get that 
\begin{align*}
\norme{A_1(z_1) - A_1(z_2)} 
  \le & \norme{A_1(z_1) - E_N A_1(z_1)} \\
      & + \norme{E_N A_1(z_1) - E_N A_1(z_2)} \\
      & + \norme{E_N A_1(z_2) -  A_1(z_2)}.
\end{align*}
We can fix $N \in \N$ big enough, such that the norms $\norme{A_1(z_1) - E_N A_1(z_1)}$ and \\
$\norme{E_N A_1(z_2) -  A_1(z_2)}$ are small. Now a simple computation give that
$$
  E_N A_1(z_1) - E_N A_1(z_2) = \left( \sum_{k=1}^N \left(\frac{1}{\lambda_k - z_1} - \frac{1}{\lambda_k - z_2} \right) \tens{e_k}{e_k} \right) \left( \sum_{n \in \N} a_n \tens{u_n}{e_n} \right).
$$
So we have that
\begin{align*}
  \norme{E_N A_1(z_1) - E_N A_1(z_2)}
    &\le \max_{ k=1, \dots, N } \abs{ \frac{1}{\lambda_k - z_1} - \frac{1}{\lambda_k - z_2} } \norme{\sum_{n \in \N} a_n \tens{u_n}{e_n}}.
\end{align*}

  Note that $ \norme{\sum_{n \in \N} a_n \tens{u_n}{e_n}} $ does not depend on $z_1, z_2 $. Remember that for every $k \in \N $, $x_0 \ne \Reel{\lambda_k}$, so the function $f_k : \R \rightarrow \C $ defined by $f(y) = \frac{1}{\lambda_k - x_0 - \I y} $ is continuous. So $\max_{ k=1, \dots, N } \abs{\frac{1}{\lambda_k - z_1} - \frac{1}{\lambda_k - z_2} }$ is small when $z_1$ is close to $z_2$.
We deduce that $\norme{ E_N A_1(z_1) - E_N A_1(z_2)}$ is small when $z_1$ is close to $z_2$. It follows that the maps $A_1 : s_0 \rightarrow \K{H} $ is continuous. The same proof works for the map  $A_2: s_0 \rightarrow \K{H}$.
\end{proof}

  Note that if $A_1(z)$ and $A_2(z)$ satisfy condition 3 and 4 of Theorem \ref{ThMainTheorem}, so does $A(z) = A_1(z)A_2(z) $.
  
\begin{proof}[Proof of Theorem \ref{ThDiagPlusKompactSummabilityCondition}]
  Denote $\rho$ the spectral radius of $D$. If $\Spece{D} = \lbrace \lambda \rbrace$, then there exists a compact operator $K_e$ such that $D = \lambda I + K_e  $. So $T=D+K = \lambda I + K_e + K$ is a compact perturbation of a scalar operator, and Lomonosov Theorem (see \cite[Theorem 6.1.2]{Chalendar_Partington_2011}) gives the existence of a non trivial hyperinvariant subspace.
  
  Suppose that $\Spece{D}$ contain a least two points $a$ and $b$. Considering if necessary a certain rotation $e^{\I \theta} D $ of $D$ we can assume that $\Reel{a} < \Reel{b} $. 
  By Lemma \ref{LemCompactContinu}, for almost every $x_0 \in ]\Reel{a},\Reel{b}[  \setminus \lbrace \Reel{\lambda_k}, k \in \N \rbrace$, denote $s_0 = \lbrace x_0 + \I y, y \in [-\rho-1,\rho+1] \rbrace $, we have that $A: s_0 \rightarrow \K{H}$ is a well defined and continuous application. Denote 
\begin{align*}
  s_1 &= \lbrace x + \I(\rho+1), x \in [x_0-\rho-1,x_0] \rbrace \\
  s_2 &= \lbrace x_0-\rho-1 + \I y, y \in [-\rho-1,\rho+1] \rbrace \\ 
  s_3 &= \lbrace x - \I(\rho+1), x \in [x_0-\rho-1,x_0] \rbrace.
\end{align*} 
Note that $\left( s_1 \cup s_2 \cup s_3 \right) \cap \Spec{D} = \emptyset $. So for all $z \in s_1 \cup s_2 \cup s_3$, $ (D-z)^{-1} $ is a bounded operator. So we have that
\begin{align*}
  A(z) &= \sum_{n \in \N} s_n \tens{ \left( (D-z)^{-1}u_n \right) }{\left( (D^* -\conj{z})^{-1} v_n \right)} \\
       &=  (D-z)^{-1} \left(\sum_{n \in \N} s_n \tens{u_n}{v_n}\right) (D-z)^{-1} \\
       &=  (D-z)^{-1} K (D-z)^{-1}.
\end{align*}
  Obviously $A: s_1 \cup s_2 \cup s_3 \rightarrow \K{H} $ is well defined and continuous. Denote $\Gamma = s_0 \cup s_1 \cup s_2 \cup s_3 $. As $A: s_0 \rightarrow \K{H} $ is also continuous and $ s_0 \cap \left( s_1 \cup s_2 \cup s_3 \right) = \lbrace x_0 - \I (\rho+1), x_0 + \I (\rho+1) \rbrace \ne \emptyset$, we have that $ A : \Gamma \rightarrow \K{H}$ is continuous.  Finally an application of Theorem \ref{ThMainTheorem} completes the proof.
\end{proof}

\section*{Acknowledgments}
  I would like to thank Sophie Grivaux for several discussions and for her help to improve this paper. I would like also to thank the referees for the careful reading of the manuscript, and their suggestions to improve the presentation of the paper.

\bibliographystyle{alpha}
\bibliography{Biblio}

\end{document}